  \let\c@figure\c@lstlisting
  \let\c@table\c@lstlisting
  \let\ftype@lstlisting\ftype@figure 
  \let\ftype@table\ftype@figure 
\definecolor{lightgray}{rgb}{0.8, 0.8, 0.8}
\definecolor{darkgray}{rgb}{0.7, 0.7, 0.7}
\definecolor{darkblue}{rgb}{0, 0, .4}
\newcounter{todocounter}
\theoremstyle{plain}
\newtheorem{theorem}{Theorem}[section]
\newtheorem{proposition}[theorem]{Proposition}
\newtheorem{observation}[theorem]{Observation}
\theoremstyle{definition}
\newfont{\footsc}{cmcsc10 at 8truept}
\newfont{\footbf}{cmbx10 at 8truept}
\newfont{\footrm}{cmr10 at 10truept}
\renewenvironment{abstract}%
                {
                  \begin{list}{}%
                     {\setlength{\rightmargin}{1in}%
                      \setlength{\leftmargin}{1in}}%
                   \item[]\ignorespaces\begin{small}}%
                 {\end{small}\unskip\end{list}}
\newcommand{\st}{\::\:}
\newcommand{\Av}{\operatorname{Av}}
\newcommand{\C}{\mathcal{C}}
\newcommand\mybullet{\raisebox{-5pt}{\normalsize \ensuremath{\bullet}}}
\newcommand\mycirc{\raisebox{-5pt}{\normalsize \ensuremath{\circ}}}
\def\absdot{\@ifnextchar[{\@absdotlabel}{\@absdotnolabel}}
	\def\@absdotlabel[#1]#2{%
		\node at #2 {\normalsize \mybullet};
		\node at #2 [below=2pt] {\ensuremath{#1}};
	}
	\def\@absdotnolabel#1{%
		\node at #1 {\normalsize \mybullet};
	}
\def\absdothollow{\@ifnextchar[{\@absdothollowlabel}{\@absdothollownolabel}}
	\def\@absdothollowlabel[#1]#2{%
		\node at #2 {\normalsize \textcolor{white}{\mybullet}};
		\node at #2 {\normalsize \mycirc};
		\node at #2 [below=2pt] {\ensuremath{#1}};
	}
	\def\@absdothollownolabel#1{%
		\node at #1 {\normalsize \textcolor{white}{\mybullet}};
		\node at #1 {\normalsize \mycirc};
	}
\newcommand{\plotperm}[1]{
	\foreach \j [count=\i] in {#1} {
		\absdot{(\i,\j)};
	};
}
\newcommand{\plotpermbox}[4]{
	\draw [darkgray, thick, line cap=round]
		({#1-0.5}, {#2-0.5}) rectangle ({#3+0.5}, {#4+0.5});
}
\keywords{universal permutation, superpattern, superpermutation}
\title{\sc Containing All Permutations}
\author{Michael Engen and Vincent Vatter\\[-0.25ex]
\small Department of Mathematics\\[-0.5ex]
\small University of Florida\\[-0.5ex]
\small Gainesville, Florida USA\\[-1.5ex]
}
\date{}
\begin{document}

\maketitle

\begin{abstract}
Numerous versions of the question ``what is the shortest object containing all permutations of a given length?'' have been asked over the past fifty years: by Karp (via Knuth) in 1972; by Chung, Diaconis, and Graham in 1992; by Ashlock and Tillotson in 1993; and by Arratia in 1999. The large variety of questions of this form, which have previously been considered in isolation, stands in stark contrast to the dearth of answers. We survey and synthesize these questions and their partial answers, introduce infinitely more related questions, and then establish an improved upper bound for one of these questions.
\end{abstract}

\pagestyle{main}

\section{Introduction}
\label{sec-intro}

What is the shortest object containing all permutations of length $n$? As we shall describe, there are a variety of such problems, going by an assortment of names including superpatterns and superpermutations. Throughout, we call all such problems \emph{universal permutation problems}. The diversity of these problems stems from the multiple possible definitions of the terms involved. 

To state these problems, it is necessary to view permutations as words. A \emph{word} is simply a finite sequence of \emph{letters} or \emph{entries} drawn from some \emph{alphabet}. The \emph{length} of the word $w$, denoted $|w|$ throughout, is its number of letters, and if $w$ is a word of length at least $i$, then we denote by $w(i)$ its $i$th letter. From this viewpoint, a \emph{permutation} of length $n$ is a word consisting of the letters $[n]=\{1,2,\dots,n\}$, each occurring precisely once. Permutations are thus a special type of words over the positive integers $\mathbb{P}$. Two words $u,v\in\mathbb{P}^n$ (that is, both of length $n$, with positive integer letters) are \emph{order-isomorphic} if, for all indices $i,j\in[n]$, we have
\[
	u(i)>u(j) \iff v(i)>v(j).
\]

In all universal permutation problems considered here, the object that is to contain all permutations of length $n$, called the \emph{universal} object, is a word, but there are two different types of containment. Sometimes we insist that the word $w$ contain each such permutation $\pi$ as a contiguous subsequence, or \emph{factor}, by which we mean that $w$ can be expressed as a concatenation $w=upv$ where the word $p$ is order-isomorphic to $\pi$. At other times we merely insist that $w$ contain each such permutation $\pi$ as a \emph{subsequence}, by which we mean that there are indices $1\le i_1<i_2<\cdots<i_n\le|w|$ so that the word $p=w(i_1)w(i_2)\cdots w(i_n)$ is order-isomorphic to $\pi$.

These notions of containment give rise to two different universal permutation problems. To obtain infinitely many, we vary the size of the alphabet that the letters of the universal word $w$ can be drawn from. In the strictest form, we insist that $w$ be a word over the alphabet $[n]$, meaning that $w$ is only allowed the symbols of the permutations it must contain. In this case, the notion of order-isomorphism reduces to equality: a word $p\in[n]^n$ is order-isomorphic to a permutation $\pi$ of length $n$ if and only if $p=\pi$. At the other end of the spectrum, we allow the letters of $w$ to be arbitrary positive integers. Between these extremes, another interesting case stands out: when the alphabet is $[n+1]$, thus allowing the universal word one more symbol than the permutations it must contain. Table~\ref{tab-six-problems} displays the best upper bounds established to date for the six versions of this question that have garnered the most interest. In the case of the rightmost two cells of the upper row, these upper bounds are known to be the actual answers.

\begin{table}
\caption{Current best upper bounds for the lengths of the shortest universal words in six flavors of universal permutation problems (for large $n$).}
\vspace{-\bigskipamount}
\[
\begin{array}{r|ccccc}
	&\text{words over $[n]$}
	&
	&\text{words over $[n+1]$}
	&
	&\text{words over $\mathbb{P}$}\\[4pt]
	\hline\\[-6pt]
	\text{factor}
		&n!+(n-1)!+(n-2)!
		&
		&n!+n-1
		&
		&n!+n-1
	\\
		&\quad\quad\ +(n-3)!+n-3
	\\[16pt]
	\text{subsequence}
		&\displaystyle\left\lceil n^2-\frac{7}{3}n+\frac{19}{3}\right\rceil
		&
		&\displaystyle\frac{n^2+n}{2}
		&
		&\displaystyle\left\lceil\frac{n^2+1}{2}\right\rceil
\end{array}
\]
\vspace{-\medskipamount}
\label{tab-six-problems}
\end{table}

The bounds shown in Table~\ref{tab-six-problems} weakly decrease as we move from left to right (a word over $[n]$ is also a word over $[n+1]$, which is also a word over $\mathbb{P}$) and also as we go from top to bottom (factors are also subsequences). Another notable feature of this table is that the lengths of the shortest universal words over the alphabet $[n]$ seem to be significantly greater than the lengths of the shortest universal words over the alphabet $[n+1]$, whose lengths seem to be either equal to or close to those of the shortest universal words over the largest possible alphabet, $\mathbb{P}$.

We remark that some research in this area has sought a universal \emph{permutation} instead of a universal word, but this is in fact equivalent to finding a universal word over $\mathbb{P}$, as we briefly explain. The word $u\in\mathbb{P}^n$ is \emph{order-homomorphic} to the word $v\in\mathbb{P}^n$ if, for all indices $i,j\in[n]$, we  have
\[
	u(i)>u(j) \implies v(i)>v(j).
\]
Less formally, if $u$ is order-homomorphic to $v$, then all strict inequalities between entries of $u$ also hold between the corresponding entries of $v$, but equalities between entries of $u$ may be broken in $v$. It is clear that every word over $\mathbb{P}$ is order-homomorphic to at least one permutation (one simply needs to ``break ties'' among the letters of the word), and it follows that if $u$ contains the permutation $\pi$ (as a factor or subsequence) and $u$ is order-homomorphic to $v$, then $v$ also contains $\pi$ (in the same sense---indeed, in the same indices---that $u$ contains it). As every permutation is also a word over $\mathbb{P}$, it follows finding a universal word, in either the factor or subsequence setting, is equivalent to finding a universal word over $\mathbb{P}$.

Each of the subsequent five sections of this paper is devoted to the examination of one of the cells of Table~\ref{tab-six-problems} (except for Section~\ref{sec-factors-n+1}, which considers both the upper-center and upper-right cells). While the results described in Sections~\ref{sec-factors-n}--\ref{sec-seq-n+1} are previously known, the results of Section~\ref{sec-subseq-P} appear for the first time here. In the final section, we briefly describe some further variations on universal permutation problems.

\section{As Factors, Over $[n]$}
\label{sec-factors-n}

The case in the upper-left of Table~\ref{tab-six-problems} dates to a 1993 paper of Ashlock and Tillotson~\cite{ashlock:construction-of:} and can be restated as follows.
\begin{quote}
	What is the length of the shortest word over the alphabet $[n]$ that contains each permutation of length $n$ as a factor?
\end{quote}
This version of the universal permutation problem has recently attracted a surprising amount of attention, including an article in \emph{The Verge}~\cite{griggs:an-anonymous-4c:} and two in \emph{Quanta Magazine}~\cite{honner:unscrambling-th:,klarreich:mystery-math-wh:}, and investigations are very much ongoing.

We call a word over the alphabet $[n]$ that contains all permutations of length $n$ as factors an \emph{$n$-superpermutation}. A (not particularly good) lower bound on the length of $n$-superpermutations is easy to establish by observing that every word $w$ has at most $|w|-n+1$ many factors of length $n$.

\begin{observation}
\label{obs-number-of-factors}
Every $n$-superpermutation has length at least $n!+n-1$.
\end{observation}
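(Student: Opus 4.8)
The plan is to prove this by a direct counting argument on the length-$n$ factors of a superpermutation, exactly as the remark preceding the statement suggests. Let $w$ be an $n$-superpermutation, so $w$ is a word over $[n]$ containing every permutation of length $n$ as a factor. I would first record the elementary fact that the length-$n$ factors of $w$ are precisely the $|w|-n+1$ windows $w(i)w(i+1)\cdots w(i+n-1)$ for $i\in\{1,2,\dots,|w|-n+1\}$ (assuming $|w|\ge n$, which is certainly the case here). Hence $w$ has at most $|w|-n+1$ distinct factors of length $n$.

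The crucial point is that, because $w$ is a word over the restricted alphabet $[n]$, the notion of order-isomorphism collapses to literal equality: as already observed in the introduction, a word $p\in[n]^n$ is order-isomorphic to a permutation $\pi$ of length $n$ if and only if $p=\pi$. Therefore the factor of $w$ witnessing the containment of $\pi$ is not merely order-isomorphic to $\pi$ but in fact equal to $\pi$ as a word. Since the $n!$ permutations of length $n$ are pairwise distinct as words, they must be witnessed by pairwise distinct length-$n$ factors of $w$.

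Combining the two observations, the number of distinct length-$n$ factors of $w$ is at least $n!$ and at most $|w|-n+1$, which forces
\[
	|w|-n+1 \ge n!,
\]
and hence $|w|\ge n!+n-1$, as claimed.

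There is no real obstacle in this argument; the only point requiring a moment's care is the reduction of order-isomorphism to equality over the alphabet $[n]$, which guarantees that distinct permutations genuinely occupy distinct factors rather than sharing a single window. Everything else is a one-line pigeonhole count, consistent with the author's own characterization of this as a ``not particularly good'' bound that is ``easy to establish.''
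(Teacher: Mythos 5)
Your proof is correct and is exactly the paper's argument (the paper gives only the one-line observation that a word $w$ has at most $|w|-n+1$ factors of length $n$, which you have simply spelled out). The one remark worth making is that the reduction of order-isomorphism to equality is not actually needed for the pigeonhole step: a single window of length $n$ is order-isomorphic to at most one permutation regardless of the alphabet, so distinct permutations always require distinct windows.
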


In the cases of $n=1$ and $n=2$, the shortest $n$-superpermutations are easy to find. The word $1$ meets the demands for $n=1$ and the word $121$ is as short as possible for $n=2$. The shortest $3$-superpermutation has length $9$---one more than the lower bound above, but may be shown to be optimal with a slightly more delicate argument, which we now present. First, there is a word of length $9$,
\[
	123121321,
\]
that contains all permutations of length $3$ as factors. Now suppose that the word $w$ over the alphabet $[3]$ contains all permutations of length $3$ as factors. We say that the letter $w(i)$ is \emph{wasted} if the factor $w(i-2)w(i-1)w(i)$ is not equal to a new permutation of length $3$---either because not all of the letters are defined, or because it contains a repeated letter, or because that permutation occurs earlier in $w$. As each nonwasted letter corresponds to the first occurrence of a permutation, we have 
\[
	|w| 
	= 
	3! + (\text{\# of wasted letters in $w$}).
\] 
Clearly the first two letters of $w$ are wasted. If $w$ contains an additional wasted letter, then its length must be at least $9$. Suppose then that $w$ does not contain any additional wasted letters. Thus each of the factors
\[
	w(1)w(2)w(3),\ 
	w(2)w(3)w(4),\ 
	w(3)w(4)w(5),\ 
	\text{and}\ 
	w(4)w(5)w(6)
\]
must be equal to different permutations. However, the only way for these factors to be equal to permutations at all is to have $w(4)=w(1)$, $w(5)=w(2)$, and $w(6)=w(3)$, and this implies that $w(4)w(5)w(6)=w(1)w(2)w(3)$, a contradiction.

Computations by hand become more difficult at $n=4$, but we invite the reader to check that the word
\[
	123412314231243121342132413214321
\]
of length $33$ is a $4$-superpermutation, and that no shorter word suffices.

As Ashlock and Tillotson~\cite{ashlock:construction-of:} noticed, the lengths of these superpermutations are, respectively,
\begin{eqnarray*}
1!&=&1,\\
2!+1!&=&3,\\
3!+2!+1!&=&9,\text{ and}\\
4!+3!+2!+1!&=&33.
\end{eqnarray*}
They also gave a recursive construction establishing the following result.

\begin{proposition}[Ashlock and Tillotson~{\cite[Theorem 3 and Lemma 5]{ashlock:construction-of:}}]
\label{prop-factor-[n]-upper-bound}
If there is an $(n-1)$-superpermutation of length $m$, then there is an $n$-superpermutation of length $n!+m$.
\end{proposition}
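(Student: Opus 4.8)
The plan is to establish the result constructively, refining the argument already used above for small cases. Given an $(n-1)$-superpermutation $w$ of length $m$, I would build a word $W$ by scanning $w$ from left to right and, immediately after each position $i$ at which the factor $w(i-n+2)\cdots w(i)$ is a permutation $\rho$ of $[n-1]$ occurring for the first time, inserting the letter $n$ followed by a fresh copy of $\rho$. Because $w$ is an $(n-1)$-superpermutation, each of the $(n-1)!$ permutations of $[n-1]$ occurs as a factor, so this rule performs exactly $(n-1)!$ insertions, each adding $n$ letters; hence $|W|=m+n\cdot(n-1)!=m+n!$. The convention of inserting only at first occurrences is precisely what is needed to make this count exact.

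It remains to verify that $W$ contains every permutation of $[n]$ as a factor. I would reduce this to the following claim: for every permutation $\rho=\rho_1\cdots\rho_{n-1}$ of $[n-1]$, the word $\rho\,n\,\rho=\rho_1\cdots\rho_{n-1}\,n\,\rho_1\cdots\rho_{n-1}$ is a factor of $W$. Granting the claim, universality follows by a rotation argument. Let $\tau$ be an arbitrary permutation of $[n]$ and let $p$ be the position of the letter $n$ in $\tau$, so that $\tau=\tau_1\cdots\tau_{p-1}\,n\,\tau_{p+1}\cdots\tau_n$. Setting $\rho^*=\tau_{p+1}\cdots\tau_n\,\tau_1\cdots\tau_{p-1}$ (delete $n$ and rotate so that the entry following $n$ comes first), a direct check shows that the length-$n$ factor of $\rho^*\,n\,\rho^*$ beginning in position $n-p+1$ is exactly $\tau$. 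Since $\rho^*$ is a permutation of $[n-1]$, the claim supplies $\rho^*\,n\,\rho^*$ as a factor of $W$, and hence $\tau$ is a factor of $W$ as well.

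To prove the claim I would analyze the contiguous block of letters lying immediately before each insertion. Suppose we are about to insert $n$ followed by $\rho$ after the window $\rho$ ending at position $i$. Reading backwards from $w(i)$, one encounters only original letters of $w$ until reaching the most recent previous insertion, say the one made after position $k<i$; the permutation copied there ends with the value $w(k)$, so the output contains, contiguously, the string of values $w(k-n+2)\cdots w(k)\,w(k+1)\cdots w(i)$, that is, $w(k-n+2)\cdots w(i)$. Its final $n-1$ letters form the window $\rho$, and the freshly inserted $n\,\rho$ then completes an occurrence of $\rho\,n\,\rho$. If no earlier insertion exists, then $\rho=w(i-n+2)\cdots w(i)$ is already contiguous in $w$, and the same conclusion holds.

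I expect the bookkeeping in this last step to be the main obstacle: one must check carefully that, despite the interleaving of original letters and inserted blocks, the leading copy of $\rho$ always appears contiguously, so that each block $\rho\,n\,\rho$ is genuinely present. Once this is in hand, the length computation and the rotation correspondence are routine, and the proposition follows.
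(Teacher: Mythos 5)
Your construction is correct and is essentially the recursive construction of Ashlock and Tillotson that the paper cites without reproducing a proof: insert $n$ followed by a copy of each length-$(n-1)$ permutation immediately after its first occurrence as a factor. Your verification of the key contiguity claim (that the copied block after the previous insertion restores the values $w(k-n+2)\cdots w(k)$, so the length-$(n-1)$ suffix of the run preceding each new insertion is exactly the window $\rho$) and the rotation argument are both sound.
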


Proposition~\ref{prop-factor-[n]-upper-bound} guarantees an $n$-superpermutation of length at most $n!+\cdots+2!+1!$. Given this construction and the lower bounds they had been able to compute, Ashlock and Tillotson made the natural conjecture that the shortest $n$-superpermutation has length $n!+\cdots+2!+1!$ for all $n$. They further conjectured that all of the shortest $n$-superpermutations were unique up to the relabeling of their letters.

For about twenty years, very little progress seemed to have been made on these conjectures, although they were rediscovered many times on Internet forums such as MathExchange and StackOverflow (references to some of these rediscoveries are given in Johnston's article~\cite{johnston:non-uniqueness-:}). Then, in 2013, Johnston~\cite{johnston:non-uniqueness-:} constructed multiple distinct $n$-superpermutations of length $n!+\cdots+2!+1!$ for all $n\ge 5$,  proving that at least one of Ashlock and Tillotson's two conjectures must be false, although giving no hint as to which one. A year later, Benjamin Chaffin verified the $n=5$ case of the length conjecture by computer (see Johnston's blog post~\cite{johnston:all-minimal-sup:} for details), showing that no word of length less than $153=5!+4!+3!+2!+1!$ is a $5$-superpermutation. This showed, via Johnston's constructions, that Ashlock and Tillotson's uniqueness conjecture was certainly false, although their length conjecture might still have held.

The next case of the length conjecture to be verified would be $n=6$, where the conjectured shortest length was $6!+5!+4!+3!+2!+1!=873$. However, only weeks after Chaffin's verification of the length conjecture for $n=5$, Houston~\cite{houston:tackling-the-mi:}---by viewing the problem as an instance of the traveling salesman problem---found a $6$-superpermutation of length only $872$. 

Whether this is the shortest $6$-superpermutation is the focus of an ongoing distributed computing project at 
\[
	\text{\url{www.supermutations.net}.}
\]
Regardless of the outcome of that project, the $6$-superpermutation of length $872$ and Proposition~\ref{prop-factor-[n]-upper-bound} reduce the upper bound on the length of the shortest $n$-superpermutation to {$n!+\cdots+3!+2!$} for all $n\ge 6$.


After breaking the length conjecture of Aslock and Tillotson in 2014, Houston created a Google discussion group called Superpermutators, where those interested in the problem could work on it in a loose and Polymath-esque manner, and most of the subsequent research mentioned here has been communicated there.

The next breakthrough was made shortly after John Baez tweeted about Houston's construction in September 2018. This tweet caused Greg Egan, who is known for his science fiction novels (coincidently including one entitled \emph{Permutation City}~\cite{egan:permutation-cit:}), to become interested in the problem. Egan found inspiration in an unpublished manuscript of Williams~\cite{williams:hamiltonicity-o:}. In that paper, Williams showed how to construct Hamiltonian paths and cycles in the Cayley graph on the symmetric group $S_n$ generated by the two permutations denoted by $(12\cdots n)$ and $(12)$ in cycle notation (see Sawada and Williams~\cite{sawada:a-hamilton-path:} for a published, streamlined construction). Williams's construction had solved a forty year-old conjecture of Nijenhuis and Wilf~\cite{nijenhuis:combinatorial-a:} (later included by Knuth as an exercise with a difficulty rating of $48/50$ in Volume 4A of the \emph{Art of Computer Programming}~\cite[Problem 71 of Section 7.2.1.2]{knuth:the-art-of-comp:4a}), and, in October 2018, Egan showed how it could be adapted to prove the following.

\begin{theorem}[Egan~\cite{egan:superpermutatio:}]
\label{thm-egan-upper-bound}
For all $n\ge 4$, there is an $n$-superpermutation of length at most
\[
		n!+(n-1)!+(n-2)!+(n-3)!+n-3.
\]
\end{theorem}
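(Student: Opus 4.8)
The plan is to realize an $n$-superpermutation as a walk through all $n!$ permutations in which consecutive permutations are recorded as overlapping factors, and to minimize the total number of symbols by making as many transitions as possible ``cheap.'' Concretely, I would set up the \emph{overlap digraph} on the $n!$ permutations of length $n$, placing a directed edge $\pi\to\rho$ of weight $w$ whenever $\rho$ can be made to appear as a factor by appending exactly $w$ new letters to a word whose final length-$n$ window is $\pi$; a word containing every permutation as a factor then corresponds to a Hamiltonian walk, and its length is $n$ plus the sum of the weights of the edges traversed. The cheapest edges, of weight $1$, are exactly the cyclic rotations $\pi_1\pi_2\cdots\pi_n\mapsto\pi_2\cdots\pi_n\pi_1$ induced by the generator $\sigma=(1\,2\,\cdots\,n)$; these partition the permutations into $(n-1)!$ directed cycles of length $n$, which I will call \emph{$1$-cycles}. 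Since the weight-$1$ edges alone form $(n-1)!$ disjoint cycles, any Hamiltonian walk must cross between $1$-cycles using heavier edges, and the whole problem reduces to bounding the total extra cost of these crossings.

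The next ingredient is the recursive structure of these crossings. The weight-$2$ edges are the ``swap-rotate'' transitions $\pi_1\pi_2\cdots\pi_n\mapsto\pi_3\cdots\pi_n\pi_2\pi_1$ (append $\pi_2\pi_1$, wasting one letter), and at the level of $1$-cycles they act by transposing two cyclically adjacent entries of the underlying necklace. I would show that the orbits of the group generated by rotation and swap-rotate have size $n(n-1)$, grouping the $1$-cycles into $(n-2)!$ blocks of $n-1$ each, which I call \emph{$2$-cycles}; iterating, the $k$-cycles number $(n-k)!$ and each consists of $n-k+1$ many $(k-1)$-cycles joined by weight-$k$ transitions. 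Assembling the $n-k+1$ constituents of a single $k$-cycle into a path costs $n-k$ transitions of weight $k$, and carrying out the resulting bookkeeping reproduces the benchmark bound $n!+(n-1)!+\cdots+1!$ of Proposition~\ref{prop-factor-[n]-upper-bound}: one pays a full factorial at every level.

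The improvement, and the technical heart, is to avoid paying the entire factorial tail. The idea is to peel off only the three finest levels explicitly, incurring the overheads $(n-1)!$, $(n-2)!$, and $(n-3)!$, and then to assemble \emph{all} of the remaining coarse structure in one stroke rather than level by level. This is exactly where Williams's theorem enters: his Hamiltonian cycle in the Cayley graph of $S_n$ generated by $\sigma=(1\,2\,\cdots\,n)$ and $\tau=(1\,2)$ (see~\cite{williams:hamiltonicity-o:,sawada:a-hamilton-path:}, resolving the conjecture of~\cite{nijenhuis:combinatorial-a:}) provides an explicit ordering of the coarse pieces in which successive pieces are related by a single $\tau$-type move. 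I would adapt this cycle, following Egan~\cite{egan:superpermutatio:}, so that each $\tau$-move is realized, in the presence of the surrounding $\sigma$-moves, as a transition costing only a bounded number of letters; summed over the coarse ordering this contributes merely the linear term $n-3$ in place of the discarded factorial tail $(n-4)!+\cdots+1!$.

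The main obstacle is precisely this last step: verifying that the abstract Hamiltonian cycle of Williams can be decoded into a genuine superpermutation with the claimed overlaps. Two things must be checked carefully. First, the structural claims underlying the recursion---that weight-$k$ transitions really do connect the $(k-1)$-cycles within each $k$-cycle in the pattern needed to traverse them cheaply---must be proved rather than merely asserted, and it is here that the cyclic Gray-code nature of Williams's construction does the work. Second, the overlap accounting at the seams, where a coarse $\tau$-transition meets the fine $\sigma$-runs on either side, must be done exactly; this is the source of the additive constant and of the precise linear term $n-3$, and it is the step most prone to off-by-one errors. Once these are in hand, combining the $n!$ letters for the permutations themselves with the overheads $(n-1)!+(n-2)!+(n-3)!$ and the linear stitching cost $n-3$ yields the stated bound.
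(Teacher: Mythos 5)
First, a point of comparison: the paper does not prove Theorem~\ref{thm-egan-upper-bound} at all --- it is a survey, and this result is stated with a citation to Egan~\cite{egan:superpermutatio:}, with only a historical remark that Egan adapted Williams's Hamiltonian-cycle construction~\cite{williams:hamiltonicity-o:} for the Cayley graph of $S_n$ generated by $(12\cdots n)$ and $(12)$. Your outline is therefore being measured against the external source, not against anything in the text. Measured that way, your first two paragraphs correctly set up the standard framework (the weighted overlap graph, the $(n-1)!$ cyclic classes joined by weight-$1$ edges, the recursive hierarchy whose naive traversal recovers the $n!+\cdots+2!+1!$ bound of Proposition~\ref{prop-factor-[n]-upper-bound}), and your third paragraph correctly identifies the one genuinely new idea in Egan's argument: truncate the hierarchy after three levels and use a Williams-type Hamiltonian cycle to order the coarse pieces so that the discarded factorial tail is replaced by a linear term.

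The gap is that everything specific to the bound $n!+(n-1)!+(n-2)!+(n-3)!+n-3$ is asserted rather than proved, and you say so yourself (``Two things must be checked carefully\dots Once these are in hand\dots''). Concretely: (i) you never exhibit the transitions that realize each coarse $\tau$-move at the claimed cost, nor show that they can be inserted compatibly with the surrounding weight-$1$, $2$, and $3$ runs --- this is the entire content of Egan's construction, and it is exactly where the $(n-3)!$ coefficient and the additive $n-3$ come from, so no amount of generic overlap accounting will produce them; and (ii) your structural claim that the level-$k$ classes arise as \emph{orbits of the group generated by rotation and swap-rotate} is not correct --- adjacent transpositions of beads connect all $(n-1)!$ necklaces, so that group action has a single orbit, and the paper itself cautions (in Section~\ref{sec-factors-n}) that the trajectories/$2$-loops do not even partition the permutations; the correct hierarchy is defined by a specific deterministic ``complete a class, then waste one letter'' procedure, not by a group action. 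As it stands the proposal is a faithful roadmap to Egan's proof, but it defers precisely the step that distinguishes the theorem from the older bound, so it does not establish the statement.
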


For $n=6$, the construction of Theorem~\ref{thm-egan-upper-bound} is worse than Houston's (Theorem~\ref{thm-egan-upper-bound} gives a $6$-superpermutation of length $873$), but for $n\ge 7$ this bound is strictly less than the bound of $n!+\cdots+3!+2!$ implied by Houston's construction and Proposition~\ref{prop-factor-[n]-upper-bound}.

The efforts described above yield upper bounds. For lower bounds, Ashlock and Tillotson improved on Observation~\ref{obs-number-of-factors} by focusing on wasted letters as we did earlier in the $n=3$ case. For general $n$, we say that the letter $w(i)$ is \emph{wasted} if the factor
\[
	w(i-n+1)w(i-n+2)\cdots w(i)
\]
is either not a permutation of length $n$, or occurs earlier in $w$. The crucial observation is that if neither $w(i)$ nor $w(i+1)$ are wasted letters, then the permutations ending at those letters are cyclic rotations of each other. The $n!$ permutations of length $n$ can be partitioned into $(n-1)!$ disjoint cyclic classes, where the \emph{cyclic class} of the permutation $\pi$ consists of all of its cyclic rotations. For example, the cyclic class of the permutation $12345$ is
\[
	\{12345, 23451, 34512, 45123, 51234\}.
\]
Our reasoning above implies that upon completing a cyclic class (having visited all of its members), the next letter in the word (if there is one) must be wasted. Any $n$-superpermutation must complete all $(n-1)!$ cyclic classes, and thus doing so requires at least $(n-1)!-1$ wasted letters. Together with the $n-1$ letters at the beginning of $w$, which are trivially wasted, we obtain the following result.

\begin{proposition}[Ashlock and Tillotson~{\cite[proof of Theorem 18]{ashlock:construction-of:}}]
\label{prop-factor-[n]-lower-bound}
For all $n\ge 1$, every $n$-superpermutation has length at least
\[
	n!+(n-1)!+n-2.
\]
\end{proposition}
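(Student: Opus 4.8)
The plan is to count \emph{wasted} letters exactly as in the $n=3$ warm-up, now exploiting the extra structure supplied by cyclic classes. First I would record the basic bookkeeping identity: since each nonwasted letter $w(i)$ marks the first occurrence of one of the $n!$ permutations of length $n$, and every such permutation occurs in an $n$-superpermutation, there are exactly $n!$ nonwasted letters, whence
\[
	|w| = n! + (\text{\# of wasted letters in }w).
\]
It therefore suffices to exhibit at least $(n-1)! + n - 2$ wasted letters.

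Next I would establish the crucial observation rigorously. Suppose $w(i)$ and $w(i+1)$ are both nonwasted, so the factors $F_i = w(i-n+1)\cdots w(i)$ and $F_{i+1}=w(i-n+2)\cdots w(i+1)$ are both permutations. They share the $n-1$ middle letters $w(i-n+2)\cdots w(i)$, which in each factor account for all of $[n]$ except a single missing value; since the shared set of $n-1$ values is the same, the missing value is the same, forcing $w(i+1)=w(i-n+1)$. Thus $F_{i+1}$ arises from $F_i$ by deleting its first letter and appending it—a left cyclic rotation—so $F_i$ and $F_{i+1}$ lie in the same cyclic class. Consequently every maximal run of consecutive nonwasted letters visits permutations drawn from a single cyclic class, and because that class has exactly $n$ members, the permutations visited within a run are distinct.

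With this in hand I would run the count over the $(n-1)!$ cyclic classes. For each class let $c_k$ be the position at which its $n$th (hence final) distinct member first appears; these $(n-1)!$ positions are pairwise distinct and each satisfies $c_k \ge n$, since no length-$n$ factor is even defined before position $n$. If $w(c_k)$ is not the last letter of $w$, then $w(c_k+1)$ must be wasted: were it nonwasted, its factor would be a cyclic rotation of $F_{c_k}$ by the observation above, hence a member of an already-exhausted class, contradicting that it is a new permutation. The positions $c_k+1$ are pairwise distinct and all satisfy $c_k+1 \ge n+1$, and at most one of the $c_k$ can be the final letter of $w$; this produces at least $(n-1)!-1$ wasted letters, all at index $\ge n+1$. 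Adjoining the $n-1$ trivially wasted letters at positions $1,\dots,n-1$—which are disjoint from the $c_k+1$ since those exceed $n-1$—yields at least $(n-1)+(n-1)!-1 = (n-1)!+n-2$ wasted letters, and the bookkeeping identity completes the proof.

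The main obstacle I anticipate is not any single ingredient but the disjointness bookkeeping: one must ensure no wasted letter is counted twice (e.g.\ forced by two different class completions) and that the completion-forced wasted letters never coincide with the trivially wasted opening letters. Both issues are resolved by the distinctness of the $c_k$ together with the bound $c_k \ge n$ (so $c_k+1 \ge n+1 > n-1$), and verifying these cleanly is the only place where genuine care is required.
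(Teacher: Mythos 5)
Your proposal is correct and follows essentially the same route as the paper: the wasted-letter bookkeeping, the observation that consecutive nonwasted letters yield factors in the same cyclic class, one forced wasted letter after each of the $(n-1)!$ class completions (save possibly the last), plus the $n-1$ trivially wasted opening letters. You have simply filled in the details---the shared-middle-letters argument for the rotation claim and the disjointness of the counted positions---that the paper leaves as an informal sketch.
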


At least since a 2013 blog post of Johnston~\cite{johnston:the-minimal-sup:}, it had been known that there was an argument (on a website devoted to anime) claiming to improve on the lower bound provided by Proposition~\ref{prop-factor-[n]-lower-bound}. However, the argument was far from what most mathematicians would consider a proof, and there had been no efforts to make it into one, in part because the claimed lower bound was so far from what was thought to be the correct answer at the time. However, Egan's breakthrough quickly inspired several participants of the Superpermutators group to re-examine the argument. In the process, it was realized not only that the argument was correct, but that it did not originate on the anime website where Johnston had found it. Instead, it had been copied there from a series of anonymous posts in 2011 on the somewhat-notorious Internet forum 4chan.

The crux of the argument is an idea that we call a trajectory (though the original proof called it a $2$-loop). The proof of Proposition~\ref{prop-factor-[n]-lower-bound} suggests that in building an $n$-superpermutation, one might try to complete an entire cyclic class, then waste a single letter to enter a new cyclic class, and so on. For example, in the $n=5$ case, suppose we visit the cyclic class of $12345$ in order,
\[
	12345, 23451, 34512, 45123, 51234.
\]
Once we have come to the $4$ of $51234$, there is a unique way to waste a single letter to move to a different cyclic class; this is to append the letters $15$, and doing so moves us to the cyclic class of the permutation $23415$. It would then be natural to complete this cyclic class, by visiting the permutations
\[
	23415, 34152, 41523, 15234, 52341.
\]
After that it would again be natural to waste a letter to traverse to the cyclic class of $34125$ and complete that class by visiting the permutations
\[
	34125, 41253, 12534, 25341, 53412.
\]
Finally, by wasting a letter to enter and complete the cyclic class of $41235$, we would encounter the permutations
\[
	41235, 12354, 23541, 35412, 54123
\]
in that order. However, from that point there would be no way to waste a single letter to enter a new cyclic class; by appending a $5$ we would cycle back to $41235$, while appending a $45$ would return us to $12345$. In general, by following this procedure one would visit ${n-1}$ cyclic classes before reaching a point where wasting a single letter would either cause us to stay in the same cyclic class or to return to the initial permutation. We define the \emph{trajectory} of the permutation $\pi$ of length $n$ to consist of the sequence of $n(n-1)$ permutations visited by following this procedure starting at $\pi$, and thus the above sequence of permutations is the trajectory of $12345$. We caution the reader that trajectories do \emph{not} partition the set of permutations; while $23451$ lies in the trajectory of $12345$, the trajectories of $23451$ and of $12345$ contain different sets of permutations.

As we read a superpermutation from left to right, we keep track of which trajectory we are on. We begin on the trajectory of the first permutation we see in the word. After that, we say that we \emph{change} trajectories whenever the word deviates from the above pattern of traversing an entire cyclic class, wasting a letter, traversing an entire cyclic class, etc., and the trajectory we change \emph{to} is the trajectory of the first permutation encountered after a change of trajectories. Changing trajectories obviously requires at least one wasted letter because one must at least change cyclic classes to change trajectories. We view the wasted letter immediately before entering the new trajectory (that is, encountering a new permutation) as the letter wasted to change trajectories. As each trajectory contains $n(n-1)$ permutations, any $n$-superpermutation must change trajectory at least $(n-2)!-1$ times, and doing so requires at least $(n-2)!-1$ wasted letters.

To improve on Proposition~\ref{prop-factor-[n]-lower-bound}, we now argue as follows. As in the proof of Proposition~\ref{prop-factor-[n]-lower-bound}, any $n$-superpermutation must complete all $(n-1)!$ cyclic classes, and doing so requires at least $(n-1)!-1$ wasted letters. We view the letter wasted immediately after completing a cyclic class as the letter wasted to leave a completed cyclic class. For example, suppose that our word begins with the prefix
\[
	123451234\underline{1}523\underline{1}4.
\]
Thus we begin on the trajectory of $12345$. The next four letters, $1234$, complete the cyclic class of $12345$. The letter immediately after that (the first $\underline{1}$ above) is wasted to leave that completed cyclic class. We then visit the permutations $23415$, $34152$, and $41523$ in that order before wasting another letter (the second $\underline{1}$ above) to change trajectories.

Finally, we note that the letters wasted to complete cyclic classes and those wasted to change trajectory must be distinct---indeed, this claim amounts to saying that when one has completed a cyclic class, wasting a single letter does not change trajectories. This completes the proof of the following result.

\begin{theorem}[Anonymous 4chan poster]
\label{thm-4chan-lower-bound}
For all $n\ge 1$, every $n$-superpermutation has length at least
\[
	n!+(n-1)!+(n-2)!+n-3.
\]
\end{theorem}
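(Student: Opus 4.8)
The plan is to bound the number of \emph{wasted} letters from below. Writing $w$ for an $n$-superpermutation, recall that $|w| = n! + (\text{number of wasted letters in } w)$, since each nonwasted letter marks the first appearance of a distinct permutation of length $n$. I would therefore exhibit three pairwise disjoint families of wasted letters and sum their sizes. The first family consists of the letters $w(1),\dots,w(n-1)$, which are trivially wasted because no length-$n$ factor is yet complete. The second consists of the letters wasted immediately after a cyclic class is completed; exactly as in the proof of Proposition~\ref{prop-factor-[n]-lower-bound}, completing all $(n-1)!$ cyclic classes forces at least $(n-1)!-1$ such letters. The third consists of the letters wasted immediately before a change of trajectory.

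For the third family I would first argue that any $n$-superpermutation changes trajectory at least $(n-2)!-1$ times. Partition the reading of $w$ into maximal segments on each of which we remain on a single trajectory, say $k$ segments in all, so that $w$ changes trajectory $k-1$ times. The key observation is that every permutation encountered within a segment is one of the $n(n-1)$ permutations comprising that segment's trajectory; hence the permutations appearing anywhere in $w$ are covered by $k$ sets of size $n(n-1)$, so at most $k\cdot n(n-1)$ distinct permutations occur. Since $w$ is universal this quantity is at least $n!$, forcing $k\ge n!/(n(n-1))=(n-2)!$ and therefore at least $(n-2)!-1$ changes; overlaps between distinct trajectories only strengthen this. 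Charging each change to the single wasted letter immediately preceding the first permutation of the new trajectory yields at least $(n-2)!-1$ wasted letters in the third family.

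The main obstacle is to verify that these three families are pairwise disjoint, for only then may we add their sizes. The initial letters lie at positions $1,\dots,n-1$, before any permutation---let alone any cyclic class---has been completed, so they meet neither of the other families. The crux is that the second and third families are disjoint, which I would reduce to the assertion that \emph{wasting a single letter after completing a cyclic class cannot change the trajectory}. Concretely, suppose a cyclic class has just been completed, and consider the ensuing run of wasted letters. If that run has length one, then the forced continuation is the unique single-waste move, which by the very definition of a trajectory advances to the next cyclic class of the \emph{same} trajectory; hence no change occurs and this letter is never charged to the third family. If the run has length greater than one, then the class-leaving letter (the first of the run) and the trajectory-changing letter (the last of the run) occupy distinct positions. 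Either way the two families do not meet.

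Combining the three bounds, the number of wasted letters is at least $(n-1)+\bigl((n-1)!-1\bigr)+\bigl((n-2)!-1\bigr)=(n-1)!+(n-2)!+n-3$, whence $|w|\ge n!+(n-1)!+(n-2)!+n-3$, as claimed. The one step requiring genuine care is the uniqueness of the single-waste move and its consequence that such a move preserves the current trajectory; everything else is bookkeeping of positions.
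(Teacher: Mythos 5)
Your proposal follows the paper's argument essentially verbatim: the same accounting $|w|=n!+(\text{wasted letters})$, the same three disjoint families (the $n-1$ initial letters, the $(n-1)!-1$ letters wasted to leave completed cyclic classes, and the $(n-2)!-1$ letters wasted to change trajectories), with disjointness resting on the identical key claim that wasting a single letter after completing a cyclic class cannot change trajectories. If anything, your justification of the number of trajectory changes and of the disjointness is slightly more detailed than the paper's terse treatment.
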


Houston has shown (in the Superpermutators group) that the bound in Theorem~\ref{thm-4chan-lower-bound} can be increased by $1$. For general $n$, Theorem~\ref{thm-egan-upper-bound} and this improvement to Theorem~\ref{thm-4chan-lower-bound} are the best results established so far. There had been some hope in the Superpermutators group that perhaps Egan's construction could be made one letter shorter for $n\ge 7$, while the lower bound could be increased by $(n-3)!-1$, so that the two met at
\[
	n!+(n-1)!+(n-2)!+(n-3)!+n-4,
\]
but this has also been shown to be false in the $n=7$ case. In this case, the original length conjecture of Ashlock and Tillotson suggested that the length of the shortest $7$-superpermutation should be $7!+6!+5!+4!+3!+2!+1!=5913$, while Egan's Theorem~\ref{thm-egan-upper-bound} gives a $7$-superpermutation of length $7!+6!+5!+4!+4=5908$. In February 2019, Bogdan Coanda made several theoretical improvements to the computer search for superpermutations and used these to find a $7$-superpermutation of length $7!+6!+5!+4!+3=5907$, thus matching the wishful thinking above. (Continuing the tradition of ``publishing'' progress on this problem in unorthodox places, Coanda announced his construction pseudonymously in the comment section of a YouTube video~\cite{parker:superpermutatio:} about the problem.) Shortly thereafter, Egan and Houston modified Coanda's approach to construct a $7$-superpermutation of length $7!+6!+5!+4!+2=5906$.

\section{As Factors, Over $[n+1]$ and $\mathbb{P}$}
\label{sec-factors-n+1}

In moving from the previous universal permutation problem to this one, we see for the first of two times the dramatic effect of adding a letter to the alphabet. Not only does the addition of a single letter seem to significantly shorten the universal words, but it changes the problem from one that remains wide open to one solved a decade ago.

A \emph{de Bruijn word} of order $n$ over the alphabet $[k]$ is a word $w$ of length $k^n$ such that every word in $[k]^n$ occurs exactly once as a \emph{cyclic factor} in $w$, or equivalently, every such word occurs exactly once as a factor in the longer word
\[
	w(1)w(2)\cdots w(k^n)\ w(1)w(2)\cdots w(n-1).
\]
These words were (mis)named for de Bruijn (see \cite{bruijn:acknowledgement:}) because in addition to establishing that such words exist, he showed that there are precisely $(k!)^{k^{n-1}}/k^n$ of them. An example of a de Bruijn word, written cyclically, is shown on the left of Figure~\ref{fig-debruijn}.

\begin{figure}
\begin{footnotesize}
\begin{center}
	\begin{tikzpicture}[scale=0.375, baseline=(current bounding box.south)]
		\foreach \val [count = \i] in {1,1,1,2,2,2,3,3,3,1,2,3,1,3,3,2,2,1,1,3,2,3,2,1,2,1,3} {
			\draw ({90-(\i-1)*360/27}: 4) node{\val};
		}
		\draw [->] (0,1) arc (90:-180: 1);
	\end{tikzpicture}
\quad\quad\quad\quad
	\begin{tikzpicture}[scale=0.375, baseline=(current bounding box.south)]
		\foreach \val [count = \i] in {1,2,3,4,1,2,5,3,4,1,5,3,2,1,4,5,3,2,4,1,3,2,5,4} {
			\draw ({90-(\i-1)*360/24}: 4) node{\val};
		}
		\draw [->] (0,1) arc (90:-180: 1);
	\end{tikzpicture}
\vspace{0.15in}
\caption{On the left, a de Bruijn word of order $3$ over the alphabet $[3]$. On the right, a universal cycle for the permutations of length $4$ over the alphabet $[5]$.}
\label{fig-debruijn}
\end{center}
\end{footnotesize}
\end{figure}
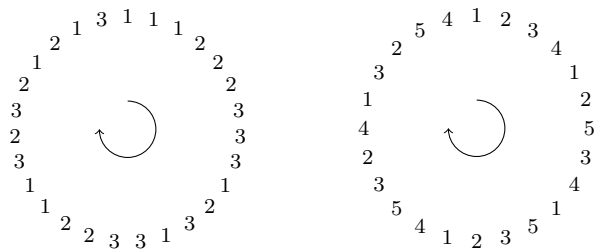

In their highly influential 1992 paper, Chung, Diaconis, and Graham~\cite{chung:universal-cycle:} explored generalizations of de Bruijn words to other types of objects, including permutations. (In fact, Diaconis and Graham~\cite[Chapter 4]{diaconis:magical-mathema:} state that their motivation was a magic trick.) As they defined it, a \emph{universal cycle} (frequently shortened to \emph{ucycle}) for the permutations of length $n$ would be a word $w$ of length $n!$ (over some alphabet) such that every permutation of length $n$ is order-isomorphic to a cyclic factor of $w$, or equivalently, to a factor of the slightly longer word $w(1)w(2)\cdots w(n!)\ w(1)w(2)\cdots w(n-1)$. An example of a universal cycle over $[5]$, written cyclically, for the permutations of length $4$ is shown on the right of Figure~\ref{fig-debruijn}.

If such a universal cycle $w$ were to exist (which was the question they were interested in, leaving enumerative concerns for later), then the word
\[
	w(1)w(2)\cdots w(n!)\ w(1)w(2)\cdots w(n-1)
\]
would be, in our terms, a shortest possible answer to the universal permutation problem for factors over the alphabet $\mathbb{P}$. In this way, their universal cycle of length $4!=24$ for the permutations of length $4$ shown on the right of Figure~\ref{fig-debruijn} is converted (starting at noon and proceeding clockwise) into the universal word
\[
	123412534153214532413254\ 123
\]
of length $4!+4-1=27$. Thus, together with the trivial lower bound of $n! + n - 1$ noted in Observation~\ref{obs-number-of-factors}, the answer to the question posed in the upper-righthand cell of Table~\ref{tab-six-problems} is implied by the following result.

\begin{theorem}[Chung, Diaconis, and Graham~\cite{chung:universal-cycle:}]
\label{thm-univ-factor-P}
For all positive integers $n$, there is a universal cycle over the alphabet $[6n]$ for the permutations of length $n$.
\end{theorem}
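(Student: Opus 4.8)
The natural framework is a de~Bruijn-type \emph{overlap graph} $G_n$. Its vertices are the $(n-1)!$ order-isomorphism classes of length-$(n-1)$ sequences of distinct values (equivalently, the patterns of length $n-1$), and its edges are the $n!$ patterns of length $n$: the edge labelled by a pattern $\pi$ runs from the standardization of $\pi(1)\cdots\pi(n-1)$ to the standardization of $\pi(2)\cdots\pi(n)$. First I would record the two structural facts that make this graph Eulerian. Every vertex $\sigma$ has out-degree exactly $n$, since appending one new entry to a representative of $\sigma$ can place that entry in any of $n$ relative ranks, each yielding a distinct length-$n$ pattern whose first $n-1$ entries standardize to $\sigma$; by the symmetric argument (prepending a new first entry) every in-degree is also $n$. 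Together with strong connectivity (any pattern can be carried to any other by a sequence of single-entry slides), the balance condition $\mathrm{indeg}=\mathrm{outdeg}$ guarantees an Eulerian circuit. Such a circuit traverses each length-$n$ pattern exactly once, and because consecutive edges overlap in $n-1$ coordinates it reads off as a cyclic sequence of $n!$ ``abstract letters'' in which every length-$n$ window realizes a distinct pattern and all $n!$ patterns occur. This is a universal cycle in everything but its concrete letter values.

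The remaining, and genuinely delicate, task is \emph{realization}: to replace this abstract cycle by an honest word $w\in[6n]^{n!}$ so that each cyclic window $w(i)\cdots w(i+n-1)$ is order-isomorphic to the pattern prescribed by the Eulerian circuit. I would process the circuit one edge at a time, maintaining the window of the $n-1$ most recently written values; appending the next edge forces the new letter to occupy a prescribed rank $r\in[n]$ among the $n$ values of the new window. When $2\le r\le n-1$ this means choosing a free integer strictly between two currently occupied values, and when $r=1$ or $r=n$ it means choosing a value below the current minimum or above the current maximum.

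The crux---and the step I expect to be the main obstacle---is showing that these choices can always be made inside the fixed alphabet $[6n]$, and in a way consistent with wrapping the cycle closed. A purely greedy assignment can fail: a long run of rank-$1$ (or rank-$n$) demands would push the occupied values monotonically downward (or upward) without bound, and one cannot simply rescale midway without breaking the cyclic consistency of $w$. The resolution exploits the heavy overlap between consecutive windows: each step frees the value that slides out on the left, so the occupied set is always just the $n-1$ window values and room is continually recycled. The plan is to maintain a spacing invariant---roughly, that the $n-1$ occupied values sit inside $[6n]$ with at least one free value in every gap between order-adjacent occupied values and a reserve of free values at each end---and to choose each inserted value so as to preserve this invariant (bisecting the relevant gap, or drawing from the end reserve). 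Here the generous factor of $6$ is exactly the slack that lets a counting/spacing argument close: with only $n-1$ of the $6n$ values ever occupied, there is enough reserve to absorb the extreme insertions between successive refreshes of the window and to patch the seam where the cyclic word closes up. Carrying out this invariant bookkeeping, and verifying that the prescribed sequence of ranks never demands more room than the reserve supplies, is the heart of the proof.
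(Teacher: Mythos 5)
The survey does not actually prove this theorem---it is quoted from Chung, Diaconis, and Graham---so your proposal has to stand on its own, and as written it does not. The first half is fine and is the standard route: the overlap graph on the $(n-1)!$ patterns of length $n-1$ is $n$-regular in and out and strongly connected, hence Eulerian, and an Eulerian circuit traverses each length-$n$ pattern once. But the entire content of the theorem is the alphabet bound $[6n]$, and that is precisely the part you defer (``carrying out this invariant bookkeeping \dots is the heart of the proof''). Worse, the invariant you sketch is not merely unverified---it fails. Keeping ``at least one free value in every gap between order-adjacent occupied values'' does not survive nested insertions: the circuit can demand a descending (or ascending) chain of up to $n-1$ consecutive new letters all squeezed strictly between one fixed old window value and the previously inserted letter, so a single gap must supply up to $n-1$ distinct intermediate values before either endpoint leaves the window. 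Since letters already written cannot be revised, greedy bisection gets stuck, and reserving $n-1$ spare values in each of the $n$ gaps naively costs a quadratic alphabet. The factor $6$ is not automatic slack that ``lets a counting argument close''; extracting a linear alphabet requires either look-ahead (the relative order of any $n$ consecutive letters is fully prescribed in advance, which permits a block-by-block assignment from interleaved value sets) or a genuinely amortized accounting of how freed values re-enter usable gaps, and neither is supplied.

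There is also a smaller gap earlier: you implicitly assume that \emph{every} Eulerian circuit's local order constraints are globally consistent, i.e., realizable by real values at all. Around a cyclic word the union of the window-by-window comparisons is a digraph on positions that must be shown acyclic (or one must work with a particular circuit for which this is checked); asserting that the abstract cycle ``is a universal cycle in everything but its concrete letter values'' assumes the conclusion of the realization step rather than reducing to it. So the proposal establishes the Eulerian skeleton, which is indeed how Chung, Diaconis, and Graham begin, but proves neither realizability nor the $6n$ bound that the theorem asserts.
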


Chung, Diaconis, and Graham left open the question of whether the alphabet $[6n]$ could be shrunk. Proposition~\ref{prop-factor-[n]-lower-bound} shows that, for $n\ge 3$, there cannot be a universal cycle over the alphabet $[n]$ for the permutations of length $n$. Therefore the result below, established by Johnson in 2009, is best possible.

\begin{theorem}[Johnson~\cite{johnson:universal-cycle:}]
\label{thm-univ-factor-n+1}
For all positive integers $n$, there is a universal cycle over the alphabet $[n+1]$ for the permutations of length $n$.
\end{theorem}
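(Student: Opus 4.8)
The plan is to recast the problem as one of finding a suitable closed walk in a transition graph and then to realize that walk over the small alphabet $[n+1]$. Concretely, a universal cycle over $[n+1]$ is a cyclic word $w$ of length $n!$ over $[n+1]$ in which each of the $n!$ cyclic factors of length $n$ is order-isomorphic to a distinct permutation of length $n$. The structural fact on which everything rests is this: if a window of $n$ consecutive letters of $w$ is order-isomorphic to a permutation, then it uses exactly $n$ of the $n+1$ available symbols, so there is a unique \emph{absent} symbol; and in order for the next window to again consist of $n$ distinct symbols, the newly appended letter must be either the symbol just leaving the window on the left or the unique absent symbol. Appending the departing symbol rotates the window within its cyclic class, while appending the absent symbol moves to a new cyclic class and makes the departed symbol the new absent one. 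Thus at every step there are exactly two legal moves, which I will call a \emph{rotation} and a \emph{fresh} step.

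First I would establish the existence of an abstract universal cycle (ignoring the alphabet) by a standard Eulerian argument. Let $G$ be the directed graph whose vertices are the permutations of length $n-1$ and whose edges are the permutations of length $n$, where the permutation $\pi$ of length $n$ is the edge from $\operatorname{pat}(\pi(1)\cdots\pi(n-1))$ to $\operatorname{pat}(\pi(2)\cdots\pi(n))$. Each vertex has out-degree $n$ (one choice for the relative rank of the appended entry) and in-degree $n$ (one choice for the prepended entry), so $G$ is balanced; a short argument shows it is strongly connected. Hence $G$ has an Eulerian circuit, and reading off its edges produces a cyclic sequence of $n!$ permutations of length $n$, consecutive ones overlapping in $n-1$ entries---an abstract universal cycle.

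The heart of the matter, and the content of the theorem beyond mere existence, is to realize such a cycle over $[n+1]$. Here I would use the two-move structure above to build the word directly rather than trying to realize an arbitrary Eulerian circuit. The plan is to maintain an absent-symbol bookkeeping and to traverse the permutations cyclic class by cyclic class: enter a cyclic class at some rotation, apply $n-1$ rotation steps to visit all $n$ of its members (the absent symbol held fixed throughout), and then apply a single fresh step to leave for a new cyclic class, at which point the absent symbol updates to the symbol that just departed. Since the $n!$ permutations split into $(n-1)!$ cyclic classes of size $n$, carrying this out so that each cyclic class is visited exactly once yields a closed walk of length $(n-1)!\cdot n = n!$ meeting every permutation exactly once, which is precisely a universal cycle over $[n+1]$.

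The main obstacle is organizing these forced binary moves globally. Each fresh step is determined once we choose the rotation at which to exit the current cyclic class (there are $n$ such choices), and this choice simultaneously fixes the next cyclic class, the rotation at which it is entered, and the updated absent symbol. I would therefore reduce the construction to finding a single cyclic ordering of the $(n-1)!$ cyclic classes---equivalently, a Hamiltonian cycle in an auxiliary digraph on the cyclic classes, which are in natural bijection with the permutations of length $n-1$---whose fresh-step transitions are mutually consistent and close up correctly, so that no permutation is repeated and the absent-symbol bookkeeping returns to its starting value. Verifying that such a consistent Hamiltonian ordering exists for every $n$, either by exhibiting an explicit exit rule or by a balance-and-connectivity argument on the auxiliary digraph, is the crux, and I expect it to be where essentially all of the work lies.
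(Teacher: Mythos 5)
The first thing to say is that the paper offers no proof of this statement to compare against: Theorem~\ref{thm-univ-factor-n+1} is quoted from Johnson with a citation only, so your attempt must stand on its own. It does not. Your opening structural claim contains a concrete error: it is not true that appending the unique absent symbol always moves to a new cyclic class. If the departing symbol $v$ and the absent symbol $a$ have no letter of the remaining window strictly between them in value, then $a$ compares to each of the $n-1$ retained letters exactly as $v$ does, so the ``fresh'' step produces a window order-isomorphic to the \emph{same} permutation as the rotation step --- which would repeat a permutation and is therefore forbidden. (For $n=3$ over $[4]$, from the window $1\,3\,4$ with absent symbol $2$, appending $1$ gives $3\,4\,1$ and appending $2$ gives $3\,4\,2$, both of pattern $231$.) For each window one or two of the $n$ exit rotations are degenerate in this way, two non-degenerate exits need not lead to distinct cyclic classes, and a non-degenerate fresh step can even land back in the \emph{same} cyclic class at a different member (from a window of pattern $51234$, appending a new minimum yields $23451$). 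So the clean dichotomy ``rotation stays in the class, fresh step leaves it,'' on which your whole bookkeeping rests, is false, and the auxiliary digraph you propose is not the object you describe.

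The larger problem is that even granting a corrected version of that setup, you stop exactly where the theorem begins. The Eulerian argument in your second paragraph gives only an \emph{abstract} ordering of the $n!$ permutations; the entire content of Johnson's theorem is the realization of such an ordering over the alphabet $[n+1]$, and your plan for that reduces to asserting the existence of ``a consistent Hamiltonian ordering'' of the $(n-1)!$ cyclic classes, to be found ``either by exhibiting an explicit exit rule or by a balance-and-connectivity argument.'' Neither is supplied, and neither is routine: unlike Eulerian circuits, Hamiltonian cycles admit no balance-and-connectivity criterion, and the closely analogous Hamiltonicity statements in this subject (Williams's theorem on the Cayley graph of $S_n$, discussed in Section~\ref{sec-factors-n}, which resolved a forty-year-old conjecture) were themselves hard open problems. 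You have also silently imposed the additional constraint that each cyclic class be traversed contiguously, which a universal cycle over $[n+1]$ need not satisfy and which may make your Hamiltonian ordering harder or impossible to find. As written, this is an outline that defers the essential difficulty rather than a proof.
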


In terms of universal permutation problems, Theorem~\ref{thm-univ-factor-n+1} establishes that there is a word of length $n!+n-1$ over the alphabet $[n+1]$ that contains every permutation of length $n$ as a factor.

\section{As Subsequences, Over $[n]$}
\label{sec-seq-n}

The universal permutation problem for subsequences over the alphabet $[n]$ pre-dates the others by 20 years. In a 1972 technical report entitled ``Selected Combinatorial Research Problems'' and edited together with Chv\'atal and Klarner, Knuth~\cite[Problem 36]{chvatal:selected-combin:} stated the following problem, which he attributed to Richard Karp:
\begin{quote}
What is the shortest string of $\{1,2,\dots,n\}$ containing all permutations on $n$ elements as subsequences? (For $n = 3$, $1213121$; for $n=4$, $123412314321$; for $n=5$, M. Newey claims the shortest has length $19$.)
\end{quote}
To this day, the lengths of the shortest universal words in this case are known exactly only for $1\le n\le 7$. These values were computed by Newey to be $1$, $3$, $7$, $12$, $19$, $28$, and $39$ in his 1973 technical report~\cite{newey:notes-on-a-prob:}, and he observed that this sequence is equal to $n^2-2n+4$ for $3\le n\le 7$. In fact, Newey gave a construction of universal words of this length for all $n\ge 3$, meaning $n^2-2n+4$ is an upper bound on the answer to this universal permutation problem. While Newey remarked that it is an ``obvious conjecture'' that the length of the shortest universal word in this case is $n^2-2n+4$, he also suggested a competing conjecture that would imply that the lengths grow like $n^2 - n \log_2(n)$. 

Simpler constructions of universal words of length $n^2-2n+4$ were presented in a 1974 paper of Adleman~\cite{adleman:short-permutati:}, a 1975 paper of Koutas and Hu~\cite{koutas:shortest-string:}, and a 1976 paper of Galbiati and Preparata~\cite{galbiati:on-permutation-:}. The latter two constructions were given a common generalization in the 1980 paper of Mohanty~\cite{mohanty:shortest-string:}. Interestingly, of these four papers, only Koutas and Hu were bold (or foolish) enough to conjecture that $n^2-2n+4$ is the true answer (it isn't).

After this initial flurry of activity, the problem laid dormant until the surprising 2011 work of Z\u{a}linescu~\cite{zu-alinescu:shorter-strings:}, who lowered the upper bound by $1$ for $n\ge 10$, constructing a word of length $n^2-2n+3$ that contains all permutations of length $n$ as subsequences. However, his upper bound stood for just over one year before being improved upon, for $n\ge 11$, by the following.

\begin{theorem}[Radomirovi\'c~\cite{radomirovic:a-construction-:}]
\label{thm-miller-perms}
For all $n\ge 7$, there is a word over the alphabet $[n]$ of length $\left\lceil n^2-7n/3+19/3\right\rceil$ containing subsequences equal to every permutation of length $n$.
\end{theorem}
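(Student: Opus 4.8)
The plan is to exhibit an explicit word and prove it is universal by a greedy embedding argument, so the proof splits into a construction, a length computation, and a verification of universality. For the construction I would work in the framework of \emph{monotone blocks}: the word is a concatenation $W = B_1 B_2 \cdots B_t$ in which each block $B_j$ is a factor of either the increasing permutation $12\cdots n$ or the decreasing permutation $n(n-1)\cdots 1$, with the directions of the blocks alternating. This framework makes universality tractable, because one can decompose any target permutation $\pi$ into its maximal monotone runs $R_1, R_2, \dots, R_m$, whose directions automatically alternate, and then attempt to embed $R_i$ into some block $B_{j_i}$ of matching direction with $j_1 < j_2 < \cdots < j_m$, where consecutive runs share their common boundary entry. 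The increasing permutation forces one long increasing block and the reverse permutation one long decreasing block, while the alternating permutation $2\,1\,4\,3\,6\,5\cdots$ forces roughly $n-1$ blocks; the whole game is to choose the block sizes so as to accommodate every monotone-run profile simultaneously while keeping $\sum_j |B_j|$ as small as possible.

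Second, I would pin down the block sizes so that the total length is exactly $\lceil n^2 - 7n/3 + 19/3\rceil$. Starting from the known length-$(n^2-2n+4)$ constructions of Newey and others, the gains that already reach the classical bound come from two sources: the two extremal blocks need not be full (the first need only host $R_1$ and the last only $R_m$), and consecutive blocks may be overlapped at the shared boundary entry of adjacent runs, saving one letter per overlap. The additional $\lceil (n-7)/3\rceil$ letters claimed by the theorem should come from a finer, periodic trimming scheme: I would group the interior blocks into consecutive triples and show that within each triple the sizes can be cut by one letter below the naive prescription without destroying the ability to host any three consecutive runs. A period-three trimming pattern is consistent both with the coefficient $\tfrac{1}{3}$ and with the fact that Z\u{a}linescu's earlier argument managed to save only a single letter.

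Third, and this is where the real work lies, I would prove that the trimmed word still embeds every permutation. The engine is a greedy left-to-right scan: reading $\pi$ and maintaining a pointer into $W$, I would match each run $R_i$ to the earliest still-available block of the correct direction that is long enough, carrying an invariant asserting that after processing $R_i$ the pointer has not advanced ``too far,'' quantified by a potential that balances the number of runs consumed against the number of blocks and trimmed letters consumed. The main obstacle is precisely the case analysis at the trimmed boundaries: the aggressive compression removes exactly the slack that makes the classical embedding routine, so one must show that the entry of $\pi$ which would have used a removed letter is always reroutable through a neighboring block, and that no permutation exhausts the block supply prematurely. The worst-case profiles to control are highly alternating permutations interleaved with a few long runs, and it is these that force an amortized potential rather than a naive block-by-block count. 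Finally I would discharge the hypothesis $n \ge 7$ and the ceiling by checking the boundary behaviour of the potential and verifying the handful of smallest cases directly.
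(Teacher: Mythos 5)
You should first be aware that the paper does not prove this statement: it is quoted, with a citation, from Radomirovi\'c's 2012 paper, and the only argument supplied in that section of the survey is the Kleitman--Kwiatkowski lower bound. So your proposal cannot be checked against an in-paper proof; it has to stand on its own. Its general orientation is sensible and matches the lineage of this result (Koutas--Hu, Galbiati--Preparata, Mohanty, Z\u{a}linescu, Radomirovi\'c all work with alternating monotone blocks, the decomposition of a target permutation into maximal monotone runs of alternating direction, and savings extracted by trimming letters), and your arithmetic that the improvement over $n^2-2n+4$ is $\lceil (n-7)/3\rceil$ letters is correct.

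Nevertheless there is a genuine gap: what you have written is a plan for finding a proof, and the two items that constitute the entire content of the theorem are both deferred. First, the construction is never specified. Saying that the interior blocks are grouped into triples and ``cut by one letter below the naive prescription'' does not identify which value is deleted from which block, and without that the length $\lceil n^2-7n/3+19/3\rceil$ cannot be verified (the constant $19/3$ is never derived). Second, the universality verification, which you yourself flag as ``where the real work lies,'' is not carried out; the assertion that an entry of $\pi$ which would have landed on a deleted letter ``is always reroutable through a neighboring block'' is precisely the theorem, and it fails for most trimming patterns --- this is exactly why Z\u{a}linescu could shave only a single letter and why the correct deletion scheme was nontrivial to find. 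There is also a technical soft spot in your framework: since each block is a \emph{factor} of $12\cdots n$ or $n\cdots 1$, i.e.\ a contiguous range of values, embedding a run $R_i$ into a block requires every \emph{value} of $R_i$ to survive the trimming of that block, not merely that the block be ``long enough''; a single pointer position with a scalar potential does not record which values are missing where, so the invariant as described cannot close the induction. Until the explicit word and the explicit case analysis at the trimmed boundaries are written down, the proposal does not establish the bound.
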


For a lower bound on the length of a universal word in this context, we briefly present the elementary proof given by Kleitman and Kwiatkowski~\cite{kleitman:a-lower-bound-o:}. Let $w\in[n]^\ast$ be a word that contains each permutation of length $n$ as a subsequence. Choose $\pi(1)$ to be the symbol whose earliest occurrence in $w$ is as late as possible, and note that this occurrence may not appear before $w(n)$. Next, choose $\pi(2)$ to be the symbol whose earliest occurrence after $\pi(1)$ in $w$ is as late as possible, and note that this occurrence must be at least $n-1$ symbols later. Then, choose $\pi(3)$ to be the symbol whose earliest occurrence after $\pi(1)\pi(2)$ first appears as a subsequence in $w$ is as late as possible, and note that this means that $\pi(3)$ must occur at least $n-2$ symbols later. Continuing in this manner, we construct a permutation $\pi$ whose earliest possible occurrence in $w$ requires at least
\[
	n + (n-1) + \cdots + 2 + 1
	=
	\frac{n^2+n}{2}
\]
symbols.

Kleitman and Kwiatkowski~\cite{kleitman:a-lower-bound-o:} go on to prove (via a delicate inductive argument) a lower bound of $n^2-c_\epsilon n^{(7/4)+\epsilon}$, where the constant $c_\epsilon$ depends on $\epsilon$. While this later bound lacks concreteness, it does establish that the lengths of the shortest universal words in this case are asymptotic to $n^2$.

\section{As Subsequences, Over $[n+1]$}
\label{sec-seq-n+1}

As in the factor case, by adding a single symbol to our alphabet, we again see a dramatic decrease in the length of the shortest universal word. To date, this version of the problem has only been studied implicitly, in the 2009 work of Miller~\cite{miller:asymptotic-boun:}, where she established the following bound.

\begin{theorem}[Miller~\cite{miller:asymptotic-boun:}]
\label{thm-miller-perms}
For all $n\ge 1$, there is a word over the alphabet $[n+1]$ of length $(n^2+n)/2$ containing subsequences order-isomorphic to every permutation of length $n$.
\end{theorem}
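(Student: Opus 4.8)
The plan is to exhibit an explicit word $w_n$ over $[n+1]$ of length $\binom{n+1}{2}=(n^2+n)/2$ and then prove by induction on $n$ that it contains every permutation of length $n$ as a subsequence. The construction I would use is recursive: set $w_1=1$, and obtain $w_n$ from a word $w_{n-1}$ over $[n]$ of length $\binom{n}{2}$ by interleaving $n$ copies of the new largest symbol $n+1$ among the letters of $w_{n-1}$. Since this adds exactly $n$ letters, the length satisfies $|w_n|=|w_{n-1}|+n$, and hence $|w_n|=1+2+\cdots+n=(n^2+n)/2$, matching the target. The symbol $n+1$ is precisely the ``one spare'' that the alphabet $[n+1]$ affords over $[n]$, and morally it will always play the role of the largest entry of the permutation being embedded, which is what lets the length drop from the $\sim n^2$ of the $[n]$ case (Section~\ref{sec-seq-n}) down to $\sim n^2/2$.

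The embedding argument would process a target permutation $\pi$ of length $n$ by deleting its largest entry. Write $\pi(p)=n$ for the position $p$ of the maximum, and let $\pi'$ be the permutation of length $n-1$ order-isomorphic to $\pi$ with $\pi(p)$ removed. By the inductive hypothesis, $w_{n-1}$ contains $\pi'$ as a subsequence, say at positions $s_1<\cdots<s_{n-1}$, and these positions survive (merely shifted) inside $w_n$. To promote this to an embedding of $\pi$, it suffices to find one of the interleaved copies of $n+1$ lying strictly between the images of $s_{p-1}$ and $s_p$, that is, in the $p$th gap of the inductive embedding: such a copy is automatically larger than every chosen letter and so realizes $\pi(p)=n$ in exactly the right place. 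Two of these cases are immediate: if $p=1$ I would use the leftmost interleaved copy of $n+1$, and if $p=n$ the rightmost, each of which lies before, respectively after, the entire image of $w_{n-1}$ by construction.

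The crux is the interior case $1<p<n$, and I expect this to be the main obstacle. Here I must guarantee that some copy of $n+1$ falls in the $p$th gap of the embedding of $\pi'$, and this is genuinely delicate because the location of that gap depends on \emph{which} embedding of $\pi'$ the induction hands me. Naive placements of the copies really do fail: if the copies of $n+1$ are spaced so that the only available embedding of some $\pi'$ keeps $s_{p-1}$ and $s_p$ inside a single block, then the required pattern is omitted entirely. To overcome this I would strengthen the inductive hypothesis so that it asserts not merely that $\pi'$ embeds into $w_{n-1}$, but that it embeds with any \emph{prescribed} interior gap straddling one of the interleaved maximal-symbol copies. The spacing of the copies must then be tuned—decreasing gaps, so that finer block boundaries sit precisely where the later and more constrained insertions will need them—and the real work of the proof is to verify that this stronger ``alignment'' property propagates from level $n-1$ to level $n$.

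A useful sanity check, and a possible alternative route, is that the block sizes $n,n-1,\dots,1$ sum to $(n^2+n)/2$ and exactly mirror the greedy lower-bound argument of Kleitman and Kwiatkowski recalled in the previous section. Reading $\pi$ from left to right and charging its $j$th entry to a block of size $n-j+1$ reproduces the same count $n+(n-1)+\cdots+1$, so the construction is tight against that bound, and the whole content of the theorem is to show that this greedy left-to-right embedding never gets stuck. I would keep the inductive formulation as the primary argument, but this reformulation is the natural consistency check to confirm that the $(n^2+n)/2$ figure is not merely an artifact of the recursion.
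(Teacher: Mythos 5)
There is a genuine gap, and it sits exactly where you point to it: the ``alignment'' property---that every permutation $\pi'$ of length $n-1$ embeds into $w_{n-1}$ with a \emph{prescribed} gap straddling one of the interleaved copies of $n+1$---is the entire content of the theorem, and you do not prove it or even specify the interleaving that is supposed to make it true. Worse, the strengthened inductive hypothesis is not well-formed as an induction: whether a gap of an embedding into $w_{n-1}$ ``straddles a copy of $n+1$'' is a statement about the block decomposition imposed at level $n$, so the hypothesis you want to assume at level $n-1$ refers to data that only exists one level up. To repair this you would need a level-$(n-1)$ statement about embeddings relative to a fixed partition of $w_{n-1}$ into $n$ blocks, and that is a very strong ``one letter per block'' (rainbow-embedding) requirement: with total length only $\binom{n}{2}$ the blocks have average size $(n-1)/2$, and it is far from clear---and not argued---that, say, the monotone permutations of length $n-1$ can be embedded hitting $n-1$ distinct blocks. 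Small cases already show the placement matters: interleavings of the new maximal symbol can easily produce words missing some pattern, so existence of a good interleaving cannot be waved through. Your closing ``sanity check'' is also misapplied: the greedy $n+(n-1)+\cdots+1$ lower bound recalled in Section~\ref{sec-seq-n} is for words over $[n]$ (it crucially uses that there are exactly $n$ symbols), and it does not show tightness over $[n+1]$; indeed Theorem~\ref{thm-perms-perms} beats $(n^2+n)/2$ over a larger alphabet.

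For comparison, the paper's proof is structurally different and avoids the alignment problem entirely. It takes the word to be the first $n$ runs of the ``infinite zigzag word'' (ascending runs of odd letters alternating with descending runs of even letters, restricted to $[n+1]$), and the extra $(n+1)$st symbol is used not as a spare global maximum but as room for the shift $p\mapsto p^{+1}$: Proposition~\ref{prop-miller-words} shows via a potential-function argument (the ``score'' $s$, with $s(p)+s(p^{+1})=1$ for repetition-free words, proved by a one-letter-at-a-time induction) that either $\pi$ or $\pi^{+1}$ embeds in the first $n$ runs. That single-letter induction is what replaces the global gap-alignment claim your recursion would need.
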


To establish this result, define the \emph{infinite zigzag word} to be the word formed by alternating between ascending \emph{runs} of the odd positive integers $1357\cdots$ and descending \emph{runs} of the even positive integers $\cdots 8642$,
\[
	1357\cdots\ \cdots 8642\ 
	1357\cdots\ \cdots 8642\ 
	1357\cdots\ \cdots 8642\ \cdots.
\]
While this object does not conform to most definitions of the word \emph{word} in combinatorics, we hope the reader forgives us the slight expansion of the definition adopted here. We are interested in the leftmost embeddings of words over $\mathbb{P}$ into the infinite zigzag word.

We also need two definitions. First, given a word $p\in\mathbb{P}^\ast$, we define the word $p^{+1}\in\mathbb{P}^\ast$ to be the word formed by adding $1$ to each letter of $p$, so $p^{+1}(i)=p(i)+1$ for all indices $i$ of $p$. Next we say that the word $p\in\mathbb{P}^\ast$ has an \emph{immediate repetition} if there is an index $i$ with $p(i)=p(i+1)$, i.e., if $p$ contains a factor equal to $\ell\ell$ for some letter $\ell\in\mathbb{P}$.

\begin{proposition}
\label{prop-miller-words}
If the word $p\in\mathbb{P}^n$ has no immediate repetitions, then either $p$ or $p^{+1}$ occurs as a subsequence of the first $n$ runs of the infinite zigzag word.
\end{proposition}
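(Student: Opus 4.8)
The plan is to embed $p$ greedily from left to right, always placing the current letter in the earliest admissible position of the infinite zigzag word, and then to count how many runs this \emph{leftmost embedding} consumes. Write $\rho_p(i)$ for the index of the run into which this embedding places the $i$th letter of $p$, and define $\rho_{p^{+1}}$ analogously for $p^{+1}$. The governing constraints are that a letter of odd value can occupy only an odd-indexed (ascending) run and a letter of even value only an even-indexed (descending) run; that $\rho_p(1)\le\rho_p(2)\le\cdots\le\rho_p(n)$ is nondecreasing; and that two letters placed in a common run must appear in that run's order, increasing within an odd run and decreasing within an even run. Under these rules the greedy choice is forced, so $\rho_p$ is completely determined by $p$, and exhibiting it with $\rho_p(n)\le n$ certifies that $p$ occurs as a subsequence of the first $n$ runs (and likewise for $p^{+1}$).

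First I would pin down the starting run: $\rho_p(1)=1$ when $p(1)$ is odd and $\rho_p(1)=2$ when $p(1)$ is even, since run $1$ admits no even letter. Next I would compute the \emph{advance} $\rho_p(i+1)-\rho_p(i)$ as a function of the consecutive pair $p(i),p(i+1)$. Because $p$ has no immediate repetitions, every such pair is a strict ascent or a strict descent, and a short case analysis on the two parities and the direction shows that the advance is always $0$, $1$, or $2$: it is $0$ when the letters share a parity and point in the run's own direction (increasing in an odd run, decreasing in an even run); it is $2$ when they share a parity but point against that direction, forcing a skip over the intervening run of the wrong parity; and it is exactly $1$ whenever the two letters have opposite parities, since the adjacent run then already has the needed parity. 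The absence of immediate repetitions is precisely what guarantees this clean ascent/descent dichotomy and rules out the degenerate equal-value case (which would cost $2$ in both embeddings).

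The key observation, and the step I expect to carry the whole argument, is that the advances for $p$ and for $p^{+1}$ are complementary. Since $p^{+1}$ is obtained by adding $1$ to every letter, it preserves every ascent and descent but flips every parity; tracing this through the case analysis shows that for each consecutive pair the advance in the embedding of $p$ and the advance in the embedding of $p^{+1}$ sum to exactly $2$, an opposite-parity pair contributing $1+1$ and a same-parity pair contributing $0+2$ in one order or the other. Likewise $p(1)$ and $p^{+1}(1)$ have opposite parities, so their two starting runs are $1$ and $2$ in some order and sum to $3$. Summing over the $n-1$ consecutive pairs then yields
\[
	\rho_p(n)+\rho_{p^{+1}}(n)=3+2(n-1)=2n+1,
\]
whence $\min\bigl(\rho_p(n),\rho_{p^{+1}}(n)\bigr)\le\lfloor(2n+1)/2\rfloor=n$. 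Thus at least one of $p$ and $p^{+1}$ is realized within the first $n$ runs, which is the assertion. The only genuine obstacle is the bookkeeping of the advance across the four same-parity and two opposite-parity cases; once that table is in hand, the per-pair complementarity and the final averaging are immediate.
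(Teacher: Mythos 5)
Your proposal is correct and is essentially the paper's own argument in different notation: your run index $\rho_p(n)$ is the paper's score $s(p)$ plus $n$, your advance table is the paper's case analysis of $s(p\ell)-s(p)$, and your identity $\rho_p(n)+\rho_{p^{+1}}(n)=2n+1$ is exactly the paper's inductive invariant $s(p)+s\!\left(p^{+1}\right)=1$. The only cosmetic difference is that you unroll the induction into a telescoping sum over consecutive pairs.
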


Before proving Proposition~\ref{prop-miller-words}, note that permutations do not have immediate repetitions. Thus if $\pi$ is a permutation of length $n$, Proposition~\ref{prop-miller-words} implies that either $\pi$ or $\pi^{+1}$ occurs as a subsequence in the first $n$ runs of the infinite zigzag word. Since $\pi^{+1}$ is order-isomorphic to $\pi$ and both $\pi$ and $\pi^{+1}$ are words over $[n+1]$, this implies that the restriction of the first $n$ runs of the infinite zigzag word to the alphabet $[n+1]$ contains every permutation of length $n$. For example, in the case of $n=5$ we obtain the universal word
\[
	135\ 642\ 135\ 642\ 135
\]
of length $15$ over the alphabet $[6]$.

The restriction of the infinite zigzag word described above consists of $n$ runs of average length $(n+1)/2$: if $n$ is odd, then all runs are of this length, while if $n$ is even, then half are of length $n/2$ and half are of length $(n+2)/2$. Thus Proposition~\ref{prop-miller-words} implies Theorem~\ref{thm-miller-perms}. While Proposition~\ref{prop-miller-words} does not appear explicitly in Miller~\cite{miller:asymptotic-boun:}, its proof, presented below, is adapted from her proof of Theorem~\ref{thm-miller-perms}.

\newenvironment{proof-of-prop-miller-words}{\medskip\noindent {\it Proof of Proposition~\ref{prop-miller-words}.\/}}{\qed\bigskip}
\begin{proof-of-prop-miller-words}
We define the \emph{score} of the word $p\in\mathbb{P}^\ast$, denoted by $s(p)$, as the minimum number of runs that an initial segment of the infinite zigzag word must have in order to contain $p$, minus the length of $p$. Thus our goal is to show that for every word $p\in\mathbb{P}^\ast$ without immediate repetitions, either $s(p)\le 0$ or $s(p^{+1})\le 0$. In fact, we show that for such words we have $s(p)+s(p^{+1})=1$, which implies this.

We prove this claim by induction on the length of $p$. For the base case, we see that words consisting of a single odd letter are contained in the first run of the infinite zigzag word (thus corresponding to scores of $0$) while words consisting of a single even letter are contained in the second run (corresponding to scores of $1$). Thus for every $\ell\in\mathbb{P}^1$ we have $s(\ell)+s(\ell^{+1})=1$, as desired. Now suppose that the claim is true for all words $p\in\mathbb{P}^n$ without immediate repetitions and let $\ell\in\mathbb{P}$ denote a letter. We see that, for any $p \in \mathbb{P}^n$,
\[
	s(p\ell)-s(p)
	=
	\left\{
	\begin{array}{cl}
		-1&	\begin{array}{l}
			\text{if $p(n)<\ell$ and both entries are odd or}\\
			\text{if $p(n)>\ell$ and both entries are even;}
			\end{array}
		\\[12pt]
		0&	\begin{array}{l}
			\text{if $p(n)$ and $\ell$ are of different parity; or}
			\end{array}
		\\[8pt]
		+1&	\begin{array}{l}
			\text{if $p(n)<\ell$ and both entries are even,}\\
			\text{if $p(n)=\ell$, or}\\
			\text{if $p(n)>\ell$ and both entries are odd.}
			\end{array}
	\end{array}
	\right.
\]
Because our words do not have immediate repetitions, we can ignore the possibility that $\ell=p(n)$. In the other cases, it can be seen by inspection that
\[
	\big(  s(p\ell)-s(p)  \big)
	+
	\big(  s\!\left((p\ell)^{+1}\right) - s\!\left(p^{+1}\right)\!  \big)
	=
	0.
\]
By rearranging these terms, we see that
\[
	s(p\ell) + s\!\left((p\ell)^{+1}\right)
	=
	s(p)+s\!\left(p^{+1}\right).
\]
Since $s(p)+s\!\left(p^{+1}\right)=1$ by induction, this completes the proof of the inductive claim, and thus also of the proposition.
\end{proof-of-prop-miller-words}

We conclude our consideration of this case by providing a lower bound. Suppose that the word $w$ over the alphabet $[n+1]$ contains subsequences order-isomorphic to every permutation of length $n$. For each letter $\ell\in[n+1]$, let $r_\ell$ denote the number of occurrences of the letter $\ell$ in $w$. To create a subsequence of $w$ that is order-isomorphic to a permutation, we must choose a letter of the alphabet $[n+1]$ to omit and then choose precisely one occurrence of each of the other letters. Thus the number of permutations that can be contained in $w$ is at most
\[
	\sum_{\ell\in[n+1]} r_1\cdots r_{\ell-1}r_{\ell+1}\cdots r_{n+1}
	=
	r_1 \cdots r_{n+1} \sum_{\ell \in [n+1]} \frac{1}{r_{\ell}}.
\]
Setting $m=|w|=\sum r_\ell$, we see that the above quantity attains its maximum over all $(r_1,\dots,r_{n+1})\in\mathbb{R}_{\ge 0}^{n+1}$ when each $r_\ell$ is equal to $m/(n+1)$, and in that case the number of permutations contained in $w$ is at most
\[
	(n+1)\left(\frac{m}{n+1}\right)^n.
\]
If $w$ is to contain all permutations of length $n$, then this quantity must be at least $n!$. Using the fact that $k!\ge (k/e)^k$ for all $k$, we must therefore have
\[
	(n+1)\left(\frac{m}{n+1}\right)^n
	\ge
	n!
	\ge
	\left(\frac{n}{e}\right)^n.	
\]
It follows that, asymptotically, we must have $m\ge n^2/e$.


%
%
%
%
%

\section{As Subsequences, Over $\mathbb{P}$}
\label{sec-subseq-P}

For the final cell of Table~\ref{tab-six-problems}, we seek a word over the positive integers $\mathbb{P}$ that contains all permutations of length $n$ as subsequences. As remarked upon in the Introduction, this is equivalent to seeking a permutation that contains all permutations of length $n$, and such a permutation is sometimes called an \emph{$n$-superpattern} (for example, by B\'ona~\cite[Chapter 5, Exercises 19--22 and Problems Plus 9--12]{bona:combinatorics-o:}). The first result about universal permutations of this type was obtained by Simion and Schmidt in 1985~\cite[Section~5]{simion:restricted-perm:}. They computed the number of $3$-universal permutations of length $m\ge 5$ to be
\[
	m!
	-6C_m
	+5\cdot 2^m
	+4{m\choose 2}
	-2F_m
	-14m
	+20.
\]
(Here $C_m$ denotes the $m$th Catalan number and $F_m$ denotes the $m$th \emph{combinatorial} Fibonacci number, so $F_0=F_1=1$ and $F_m=F_{m-1}+F_{m-2}$ for $m\ge 2$.) However, the first to study this version of the universal permutation problem for general $n$ was Arratia~\cite{arratia:on-the-stanley-:} in 1999. 

As our alphabet has only expanded from the version of the problem discussed in the previous section, the upper bound of $(n^2+n)/2$ established in Theorem~\ref{thm-miller-perms} also holds for the version of the problem discussed in this section. It should be noted that before Miller~\cite{miller:asymptotic-boun:} established Theorem~\ref{thm-miller-perms} in 2009, Eriksson, Eriksson, Linusson, and W{{\"{a}}}stlund~\cite{eriksson:dense-packing-o:} had established an upper bound for this problem asymptotically equal to $2n^2/3$.

Here, we give a new improvement to Miller's upper bound. In order to do so, we further restrict the infinite zigzag word, and then break ties between its letters to obtain a specific permutation $\zeta_n$. To this end, we define the word $z_n$ to be the restriction of the first $n$ runs of the infinite zigzag word to the alphabet $[n]$. When $n$ is even, each run of $z_n$ has length $n/2$. When $n$ is odd, $z_n$ consists of $(n+1)/2$ ascending odd runs, each of length $(n+1)/2$, and $(n-1)/2$ descending even runs, each of length $(n-1)/2$. Thus we have
\[
	|z_n|
	=
	\left\{
	\begin{array}{cl}
		\displaystyle\frac{n^2}{2}&\text{if $n$ is even,}\\[12pt]
		\displaystyle\frac{n^2+1}{2}&\text{if $n$ is odd.}
	\end{array}
	\right.
\]

Next we choose a specific permutation, $\zeta_n$, such that $z_n$ is order-homomorphic to $\zeta_n$. Recall that this means that for all indices $i$ and $j$,
\[
	z_n(i)>z_n(j)
	\implies
	\zeta_n(i)>\zeta_n(j).
\]
In constructing $\zeta_n$, we have the freedom to break ties between equal letters of $z_n$. That is to say, if $z_n(i)=z_n(j)$ for $i\neq j$, then in constructing $\zeta_n$ we may choose whether $\zeta_n(i)<\zeta_n(j)$ or $\zeta_n(i)>\zeta_n(j)$ arbitrarily without affecting any other pair of comparisons and thus without losing any occurrences of permutations. We choose to break these ties by replacing all instances of a given letter $k\in[n]$ in $z_n$ by a decreasing subsequence in $\zeta_n$. Thus for indices $i<j$, we have
\[
	z_n(i)=z_n(j)
	\implies
	\zeta_n(i)>\zeta_n(j).
\]
This choice uniquely determines $\zeta_n$ (up to order-isomorphism), as all comparisons between its letters are determined either in $z_n$, if the corresponding letters of $z_n$ differ, or by the rule above, if the corresponding letters of $z_n$ are the same. Figure~\ref{fig-z5-zeta5} shows the plots of $z_5$ and $\zeta_5$, where the \emph{plot} of a word $w$ over $\mathbb{P}$ is the set $\{(i,w(i))\}$ of points in the plane.

\begin{figure}
\begin{footnotesize}
\begin{center}
\begin{tabular}{ccc}
\begin{tikzpicture}[scale=0.35, baseline=(current bounding box.south)]
	\foreach \y/\val in {1/2,2/4.5,3/7,4/9.5,5/12} {
		}
	\foreach \y/\val [count = \x] in {1/2, 3/7, 5/12, 4/9.5, 2/4.5, 1/2, 3/7, 5/12, 4/9.5, 2/4.5, 1/2, 3/7, 5/12} {
		\absdot{(\x,\val)}
		\node at (\x, 0) {\y};
		}
	\draw[darkgray, thick, line cap=round] (0.5,0.5) rectangle (13.5,13.5);
	\foreach \y in {3,5,8,10} {
		\draw[darkgray, thick, line cap=round] (0.5, \y+0.5) -- (13.5, \y+0.5);
		\draw[darkgray, thick, line cap=round] (\y+0.5, 0.5) -- (\y+0.5, 13.5);
	}
\end{tikzpicture}
&\quad\quad&
\begin{tikzpicture}[scale=0.35, baseline=(current bounding box.south)]
	\plotperm{3,8,13,10,5,2,7,12,9,4,1,6,11}
	\node at (1,0) {3};
	\node at (2,0) {8};
	\node at (3,0) {1\!\!\:3};
	\node at (4,0) {1\!\!\:0};
	\node at (5,0) {5};
	\node at (6,0) {2};
	\node at (7,0) {7};
	\node at (8,0) {1\!\!\:2};
	\node at (9,0) {9};
	\node at (10,0) {4};
	\node at (11,0) {1};
	\node at (12,0) {6};
	\node at (13,0) {1\!\!\:1};
	\draw[darkgray, thick, line cap=round] (0.5,0.5) rectangle (13.5,13.5);
	\foreach \y in {3,5,8,10} {
		\draw[darkgray, thick, line cap=round] (0.5, \y+0.5) -- (13.5, \y+0.5);
		\draw[darkgray, thick, line cap=round] (\y+0.5, 0.5) -- (\y+0.5, 13.5);
	}
	\node at (0,0) {\phantom{1}};
\end{tikzpicture}
\end{tabular}
\caption{On the left, the further restriction we define of the infinite zigzag word, $z_5$. On the right, the normalized permutation formed by breaking ties, $\zeta_5$.}
\label{fig-z5-zeta5}
\end{center}
\end{footnotesize}
\end{figure}

In the following sequence of results, we show that $\zeta_n$ is \emph{almost} universal. In fact, we show that $\zeta_n$ fails to be universal only for even $n$, and in that case, the only missing permutation is the decreasing permutation $n\cdots 21$. The first of these results, Proposition~\ref{prop-distant-inv-desc}, covers almost all permutations. (In fact, Proposition~\ref{prop-distant-inv-desc-layered} shows that Proposition~\ref{prop-distant-inv-desc} handles all but $2^{n-1}$ permutations of length $n$.)

We say that two entries $\pi(j)$ and $\pi(k)$ form an \emph{inverse-descent} if $j<k$ and $\pi(j)=\pi(k)+1$. (As the name is meant to indicate, if a pair of entries forms an inverse-descent in $\pi$, then the corresponding entries of $\pi^{-1}$ form a descent.) If $\pi(j)$ and $\pi(k)$ form an inverse-descent and they are not adjacent in $\pi$ (so $k\ge j+2$), then we say that they form a \emph{distant} inverse-descent.

\begin{proposition}
\label{prop-distant-inv-desc}
If the permutation $\pi$ of length $n$ has a distant inverse-descent, then $\zeta_n$ contains a subsequence order-isomorphic to $\pi$.
\end{proposition}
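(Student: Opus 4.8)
The plan is to reduce the statement to Proposition~\ref{prop-miller-words} by collapsing the inverse-descent into a single repeated letter, and then to use the tie-breaking rule defining $\zeta_n$ to recover exactly the comparison that was collapsed. Suppose $\pi$ has a distant inverse-descent formed by $\pi(j)=v+1$ and $\pi(k)=v$ with $j<k$ and $k\ge j+2$. Note that no case split on whether $\pi$ or $\pi^{+1}$ embeds will be needed; the construction handles everything uniformly.

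First I would build a word $q\in[n-1]^n$ from $\pi$ by identifying the values $v$ and $v+1$: set $q(t)=\pi(t)$ when $\pi(t)\le v$, set $q(j)=v$ in place of $v+1$, and set $q(t)=\pi(t)-1$ when $\pi(t)\ge v+2$. This map on values is weakly increasing and is injective except that it sends both $v$ and $v+1$ to $v$; hence $q$ is a word over $[n-1]$ in which the letter $v$ occurs exactly twice (at positions $j$ and $k$) while every other letter occurs once.

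The key point, and the only place the hypothesis is used, is that $q$ has no immediate repetitions. Its one repeated letter $v$ sits at $j$ and $k$, and these are nonadjacent precisely because the inverse-descent is distant ($k\ge j+2$); moreover for any position $t\notin\{j,k\}$ we have $\pi(t)\notin\{v,v+1\}$, so $q(t)\ne v$, and thus no neighboring pair of positions can carry two equal letters. Having checked this, Proposition~\ref{prop-miller-words} applies to $q$, giving that either $q$ or $q^{+1}$ occurs as a subsequence of the first $n$ runs of the infinite zigzag word. Since $q$ uses the alphabet $[n-1]$ and $q^{+1}$ uses $[2,n]$, both lie inside $[n]$, so whichever occurs does so as a subsequence of $z_n$ (the restriction of those runs to $[n]$, which only deletes letters exceeding $n$). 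Let $i_1<\cdots<i_n$ be the positions of this occurrence, so that $z_n(i_1)\cdots z_n(i_n)$ is order-isomorphic to $q$.

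It remains to lift this occurrence to one in $\zeta_n$ that is order-isomorphic to $\pi$ rather than to $q$. For every pair of positions other than $\{j,k\}$ the relabeling preserves strict order, so the corresponding letters of $z_n$ are distinct and ordered as in $\pi$; since $z_n$ is order-homomorphic to $\zeta_n$, these strict comparisons survive in $\zeta_n$. For the pair $\{j,k\}$ we have $z_n(i_j)=z_n(i_k)$ (the doubled letter), and here the defining feature of $\zeta_n$, namely that ties among equal letters are broken into a decreasing sequence, yields $\zeta_n(i_j)>\zeta_n(i_k)$, matching $\pi(j)=v+1>v=\pi(k)$. Hence $\zeta_n(i_1)\cdots\zeta_n(i_n)$ is order-isomorphic to $\pi$. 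I expect the main obstacle to be the bookkeeping across the three regimes $\pi(t)\le v$, $\pi(t)=v+1$, and $\pi(t)\ge v+2$ when verifying that the collapse preserves every uncollapsed comparison (including pairs where one index is $j$ or $k$); the conceptual content is simply that distantness is exactly what prevents an immediate repetition, while the tie-break is exactly what reinstates the single comparison we surrendered.
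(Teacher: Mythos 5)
Your proposal is correct and follows essentially the same route as the paper's proof: collapse the two values of the distant inverse-descent to a single repeated letter, observe that distantness is exactly what avoids an immediate repetition so that Proposition~\ref{prop-miller-words} applies, and then use the decreasing tie-breaking rule defining $\zeta_n$ to recover the one surrendered comparison. The only cosmetic difference is that you state the $v+1\mapsto v$ case of the relabeling separately, whereas the paper folds it into the $\pi(i)\ge\pi(b)+1$ branch.
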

\begin{proof}
Suppose that the entries $\pi(a)$ and $\pi(b)$ form a distant inverse-descent in $\pi$, meaning that $\pi(a)=\pi(b)+1$ and $b\ge a+2$. We define the word $p\in [n-1]^n$ by
\[
	p(i)
	=
	\left\{\begin{array}{ll}
		\pi(i)&\text{if $\pi(i)\le\pi(b)$,}\\
		\pi(i)-1&\text{if $\pi(i)\ge\pi(a)=\pi(b)+1$.}
	\end{array}\right.
\]

The word $p$ has two occurrences of the letter $\pi(b)$, but because $\pi(a)$ and $\pi(b)$ form a distant inverse-descent, these two occurrences of $\pi(b)$ in $p$ do not constitute an immediate repetition. Thus Proposition~\ref{prop-miller-words} shows that either $p$ or $p^{+1}$ occurs as a subsequence in the first $n$ runs of the infinite zigzag word. As $p$ and $p^{+1}$ are both words over $[n]$, whichever of these words occurs in the first $n$ runs of the infinite zigzag word  also occurs as a subsequence of $z_n$. Suppose that this subsequence occurs in the indices $1\le i_1<i_2<\cdots<i_n\le |z_n|$, so $z_n(i_1)z_n(i_2)\cdots z_n(i_n)$ is equal to either $p$ or $p^{+1}$, and thus for $j,k \in [n]$ we have
\[
	z_n(i_j) > z_n(i_k)
	\iff 
	p(j) > p(k).
\]
Because $z_n$ is order-homomorphic to $\zeta_n$, this implies that for all pairs of indices $j,k\in[n]$ except the pair $\{a,b\}$, we have
\[
	\zeta_n(i_j) > \zeta_n(i_k)
	\iff 
	p(j) > p(k)
	\iff 
	\pi(j) > \pi(k).
\]
Furthermore, since $p(a)=p(b)$, we have $z_n(a)=z_n(b)$, and so by our construction of $\zeta_n$ it follows that $\zeta_n(a)>\zeta_n(b)$, while we know that $\pi(a)>\pi(b)$ because those entries form an inverse-descent. This verifies that $\zeta_n(i_1)\zeta_n(i_2)\cdots \zeta_n(i_n)$ is order-isomorphic to $\pi$, completing the proof.
\end{proof}

\begin{figure}
	\[
	\pi\oplus\sigma
	=\,\,
	\begin{tikzpicture}[scale=0.2, baseline=(current bounding box.center)]
		\plotpermbox{1}{1}{4}{4};
		\plotpermbox{5}{5}{8}{8};
		\node at (2.5,2.5) {$\pi$};
		\node at (6.5,6.5) {$\sigma$};
	\end{tikzpicture}
	\quad\quad\quad\quad
	\begin{tikzpicture}[scale=0.2, baseline=(current bounding box.center)]
		\plotpermbox{1}{1}{2}{2};
		\plotpermbox{3}{3}{3}{3};
		\plotpermbox{4}{4}{6}{6};
		\plotpermbox{7}{7}{8}{8};
		\begin{scope}[shift={(0,-1pt)}]
			\plotperm{2,1,3,6,5,4,8,7};
		\end{scope}
	\end{tikzpicture}
	\]
\caption{The plot of the sum of $\pi$ and $\sigma$ is shown on the left. The figure on the right shows the plot of the layered permutation $21\ 3\ 654\ 87$ with layer lengths $2$, $1$, $3$, $2$.}
\label{fig-sum-and-layered}
\end{figure}
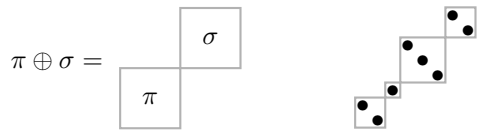

To describe the permutations that Proposition~\ref{prop-distant-inv-desc} does not apply to, we need the notions of sums of permutations and layered permutations. Given permutations $\pi$ and $\sigma$ of respective lengths $m$ and $n$, their \emph{(direct) sum} is the permutation $\pi\oplus\sigma$ of length $m+n$ defined by
\[
	(\pi\oplus\sigma)(i)
	=
	\left\{\begin{array}{ll}
		\pi(i)&\text{if $1\le i\le m$,}\\
		\sigma(j-m)+m&\text{if $m+1\le i\le m+n$.}
	\end{array}\right.
\]
Pictorially, the plot of $\pi\oplus\sigma$ then consists of the plot of $\sigma$ placed above and to the right of the plot of $\pi$, as shown on the left of Figure~\ref{fig-sum-and-layered}. A permutation is said to be \emph{layered} if it can be expressed as a sum of decreasing permutations, and in this case, these decreasing permutations are themselves called the \emph{layers}. An example of a layered permutation is shown on the right of Figure~\ref{fig-sum-and-layered}.

\begin{proposition}
\label{prop-distant-inv-desc-layered}
The permutation $\pi$ is layered if and only if it does not have a distant inverse-descent.
\end{proposition}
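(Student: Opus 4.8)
The plan is to prove both implications by examining, for each value $v\in[n-1]$, the relative positions of $v$ and $v+1$ in $\pi$, i.e.\ by comparing $\pi^{-1}(v)$ and $\pi^{-1}(v+1)$. Observe first that the values $v,v+1$ form an inverse-descent exactly when $\pi^{-1}(v+1)<\pi^{-1}(v)$ (the larger value appears first), and that this inverse-descent is distant precisely when $\pi^{-1}(v)\ge\pi^{-1}(v+1)+2$. Consequently, the hypothesis that $\pi$ has no distant inverse-descent is equivalent to the following dichotomy: for every $v\in[n-1]$, either $\pi^{-1}(v)<\pi^{-1}(v+1)$, or else $\pi^{-1}(v)=\pi^{-1}(v+1)+1$, so that $v+1$ is immediately followed by $v$. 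This reformulation drives both directions.

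For the forward implication, suppose $\pi=\delta_1\oplus\cdots\oplus\delta_r$ is layered and consider consecutive values $v,v+1$. If they lie in a common layer $\delta_i$, then since $\delta_i$ is decreasing on a block of consecutive values placed in consecutive positions, $v+1$ occupies the position immediately preceding that of $v$; this is an inverse-descent, but an adjacent one. If instead $v$ and $v+1$ lie in different layers, then $v$ is the largest value of its layer, hence at the leftmost position of its block, while $v+1$ is the smallest value of a later layer, hence at the rightmost position of a later block; thus $\pi^{-1}(v)<\pi^{-1}(v+1)$ and the pair is not an inverse-descent at all. In neither case does a distant inverse-descent occur.

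The converse is the substantive direction. Assuming the dichotomy above, I would construct the layers directly: partition $[n]$ into maximal runs of consecutive values $\{a,a+1,\dots,b\}$ in which every internal pair falls into the second case of the dichotomy (that is, $\pi^{-1}(v)=\pi^{-1}(v+1)+1$ for $a\le v<b$), breaking between $v$ and $v+1$ exactly when $\pi^{-1}(v)<\pi^{-1}(v+1)$. Telescoping the relation $\pi^{-1}(v)=\pi^{-1}(v+1)+1$ across such a run shows that the values $b,b-1,\dots,a$ occupy the consecutive positions $\pi^{-1}(b),\pi^{-1}(b)+1,\dots,\pi^{-1}(b)+(b-a)$ in decreasing order; so each run is a decreasing permutation on a contiguous block of positions, with its largest value leftmost and its smallest value rightmost. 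It then remains to verify that these blocks assemble into a direct sum, i.e.\ that the block of smaller values always occupies an earlier set of positions than the block of larger values following it.

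This block-ordering is the step I expect to be the main obstacle, and the one place where care is needed. Each block is contiguous in position and the blocks partition $[n]$, so any two blocks are comparable (one lies entirely to the left of the other); the task is to rule out an inversion. For two consecutive layers, with top value $c$ of the earlier block and bottom value $c+1$ of the next, the breaking condition gives $\pi^{-1}(c)<\pi^{-1}(c+1)$, where $\pi^{-1}(c)$ is the leftmost position of the smaller-valued block and $\pi^{-1}(c+1)$ is the rightmost position of the larger-valued block. If the larger-valued block lay entirely to the left, its rightmost position would precede the smaller block's leftmost position, contradicting $\pi^{-1}(c)<\pi^{-1}(c+1)$. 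Hence the blocks appear from left to right in increasing order of their values, and combined with the decreasing structure within each block this exhibits $\pi$ as a direct sum of decreasing permutations, i.e.\ as layered.
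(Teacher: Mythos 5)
Your proof is correct. The forward direction is the same (and trivial) in both treatments. For the converse, the underlying observation is identical to the paper's---the absence of distant inverse-descents forces each pair of consecutive values $v,v+1$ either to appear in increasing positional order or to be adjacent with $v+1$ immediately before $v$---but you organize the argument differently. The paper runs an induction on the length of $\pi$: it shows that a permutation with no distant inverse-descent must begin with the consecutive decreasing prefix $\pi(1),\pi(1)-1,\dots,1$, writes $\pi=\delta\oplus\sigma$, and applies the inductive hypothesis to $\sigma$; the block-ordering issue you isolate as ``the main obstacle'' never arises explicitly because it is absorbed into the claim that the remainder $\sigma$ sits entirely above and to the right of the first layer. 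You instead build all the layers at once, partitioning the \emph{values} into maximal runs joined by adjacent inverse-descents, telescoping to see each run is a contiguous decreasing block, and then verifying directly that the blocks are positioned in increasing value order. Your version is non-inductive and makes the layer structure completely explicit in one pass, at the cost of the extra block-ordering verification; the paper's induction is shorter but leaves the global picture implicit. Both are complete and correct.
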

\begin{proof}
One direction is completely trivial: if $\pi$ is layered then all of its inverse-descents are between consecutive entries, so it does not have a distant inverse-descent. For the other direction we use induction on the length of $\pi$. The empty permutation is layered, so the base case holds. If $\pi$ is a nonempty permutation without distant inverse-descents, then it must begin with the entries $\pi(1)$, $\pi(1)-1$, $\dots$, $2$, $1$ in that order. This means that $\pi=\delta\oplus\sigma$ where $\delta$ is a nonempty decreasing permutation and $\sigma$ is a permutation shorter than $\pi$ that also does not have any distant inverse-descents. By induction, $\sigma$ is layered, and thus $\pi$ is as well, completing the proof.
\end{proof}

Having characterized the permutations to which Proposition~\ref{prop-distant-inv-desc} does not apply, we now show that almost all of them are nevertheless contained in $\zeta_n$.

\begin{proposition}
\label{prop-layered-zeta}
If the permutation $\pi$ of length $n$ is layered and not a decreasing permutation of even length, then $\zeta_n$ contains a subsequence order-isomorphic to $\pi$.
\end{proposition}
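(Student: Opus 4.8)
The plan is to first translate the problem into a clean statement about the word $z_n$ alone, eliminating $\zeta_n$. The construction of $\zeta_n$ from $z_n$ shows that for indices $i<j$ we have $\zeta_n(i)>\zeta_n(j)$ if and only if $z_n(i)\ge z_n(j)$: a strict inequality $z_n(i)>z_n(j)$ is preserved by order-homomorphism, a tie $z_n(i)=z_n(j)$ is broken in favor of the earlier entry, and $z_n(i)<z_n(j)$ forces $\zeta_n(i)<\zeta_n(j)$. Consequently a subsequence of $\zeta_n$ at positions $i_1<\cdots<i_n$ is order-isomorphic to $\pi$ precisely when, reading the corresponding letters of $z_n$, entries lying in a common layer of $\pi$ are weakly decreasing while entries in distinct layers are strictly ordered according to their layers. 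Since in a layered permutation every entry of an earlier layer is smaller than every entry of a later one, I would record the target as: find a subsequence of $z_n$ that splits into consecutive blocks $B_1,\dots,B_k$ of sizes $\ell_1,\dots,\ell_k$ (the layer lengths) with each $B_a$ weakly decreasing and $\max B_a<\min B_{a+1}$ for every $a$. Call such a subsequence a \emph{staircase}; the proposition becomes the assertion that $z_n$ has a staircase of every layered shape except the single block of even length.

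Next I would treat the extreme case $k=1$, a single weakly decreasing block of length $n$, both because it is the hardest shape and because it isolates the exception. Here one wants a weakly decreasing subsequence of $z_n$ of full length $n$, and the natural candidate takes the value $n-r+1$ from the $r$th run $R_r$ for $r=1,\dots,n$; this yields the strictly decreasing sequence $n,n-1,\dots,1$ and uses exactly one letter per run. The catch is parity: the letters available in $R_r$ all have the parity of $r$ (odd runs ascend through the odd values, even runs descend through the even values), and $n-r+1$ has the parity of $r$ precisely when $n$ is odd. For even $n$ this antidiagonal is unrealizable, and one checks that no substitute of length $n$ exists, which is exactly why the decreasing permutation of even length is missing. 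Recording the two available ``gadgets'' is also useful: a single descending even run supplies a strictly decreasing block of length up to $\lfloor n/2\rfloor$, while the repeated occurrences of one fixed value (one per run of the appropriate parity) supply a weakly decreasing block, with the ties later broken into descents.

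For the general layered shape I would argue by induction, peeling the largest value. Removing the entry $n$ from $\pi$ leaves a layered permutation of length $n-1$, and since the first $n-1$ runs of $z_n$, restricted to $[n-1]$, are exactly $z_{n-1}$, a staircase for the smaller shape lives inside $z_n$; the step is then to insert a copy of the value $n$ at the head of the top block, after everything below it but before the rest of that block. The value $n$ occurs at the extreme end of every run of its parity, so the question is purely whether such an occurrence sits in the required positional window. This extension step is where I expect the real work to lie, and it is precisely where the parity obstruction must be made to bite only in the excepted case: the reduction sends the forbidden shape (single block, even length) to a realizable one, so correctness hinges on showing the insertion succeeds for every non-exceptional $\pi$ and fails only for the even decreasing permutation.

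The main obstacle, therefore, is the bookkeeping that balances the run budget against the fixed parities of the runs. Unlike the many-layer regime, where one can realize every layer by repeated values drawn from the $\lceil n/2\rceil$ odd runs, the few-layer regime forces us to pack long blocks into the descending even runs, and the two mechanisms must be interleaved so that the blocks still ascend from lower-left to upper-right. Particular care is needed for small boundary cases---most notably $k=2$ with $\ell_2=2$ and $n$ even, where naively peeling the top two values reproduces the forbidden single even block---and handling these, either by peeling a single value or by a direct placement of the two gadgets, is the delicate point the argument must get right.
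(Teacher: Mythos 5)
Your reduction to a ``staircase'' statement about $z_n$ is sound (the tie-breaking rule does make a layer of $\pi$ correspond to a weakly decreasing block of $z_n$-letters, with strict increase between blocks), and your antidiagonal embedding of the odd-length decreasing permutation is correct. But the heart of the proposition --- that \emph{every} non-exceptional layered permutation embeds --- is not proved. You defer it to an induction that peels the maximum entry, and you yourself flag that ``this extension step is where I expect the real work to lie''; that work is exactly the content of the proposition. As stated, the induction has concrete problems beyond bookkeeping: (i) the inductive hypothesis gives you a staircase for the reduced shape somewhere in $z_{n-1}$ but no control over \emph{where} its blocks sit, whereas the insertion step needs an occurrence of a large letter in a specific positional window (after block $B_{k-1}$, before the rest of $B_k$), and the letter $n$ occurs only once per run of a fixed parity; and (ii) the reduction can land on the forbidden shape even when $\pi$ is fine --- e.g.\ $\pi=(n-1)\cdots 21\oplus 1$ with $n$ odd reduces to the decreasing permutation of even length $n-1$, so the induction cannot even get started there. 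These are not edge cases to be ``handled delicately'' at the end; they show the induction needs either a strengthened hypothesis (tracking which runs are used and the score/slack) or a different mechanism.

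For contrast, the paper avoids positional bookkeeping entirely by working with the score function $s$ from Proposition~\ref{prop-miller-words}. Reading a layered permutation, the score changes only at layer boundaries, by an amount determined by the parities of the last entry of one layer and the first entry of the next; a small directed graph on the four parity types shows $s(\pi)\in\{-1,0,+1\}$, with $s(\pi)=+1$ forcing the top layer to be of even--odd type (hence $n$ even). In that bad case, for $\pi$ not decreasing, one perturbs $\pi$ to a word $p\in[n-1]^n$ with a deliberately created immediate repetition (decrementing or incrementing a few entries, depending on the type of the penultimate layer), checks $s(p)=0$ by the same layer-by-layer accounting, and concludes that $z_n$ contains $p$ and hence $\zeta_n$ contains $\pi$. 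If you want to complete your direct-embedding route, you would essentially have to rediscover this kind of global budget argument; as it stands, your proposal is an accurate problem reformulation plus one special case, not a proof.
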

\begin{proof}
Let $\pi$ denote an arbitrary layered permutation of length $n$. To prove the result, we compute the score of $\pi$ as in the proof of Proposition~\ref{prop-miller-words}, show that this score can only take on the values $0$ or $\pm 1$, and then describe an alternative embedding of $\pi$ in $\zeta_n$ in the case where the score of $\pi$ is $1$, except when $\pi$ is a decreasing permutation of even length.

Recall that the score of any word $\pi$, $s(\pi)$, is defined as the number of initial runs of the infinite zigzag word necessary to contain $\pi$ minus the length of $\pi$. As observed in the proof of Proposition~\ref{prop-miller-words}, the score of a word does not change upon reading a letter of opposite parity. This implies that, while reading a layered permutation, the score changes only when transitioning from one layer to the next, and thus we compute the score of $\pi$ layer-by-layer.

\renewcommand{\oe}{\small\textsf{odd}\text{--}\textsf{even}}
\newcommand{\oo}{\small\textsf{odd}\text{--}\textsf{odd}}
\newcommand{\ee}{\small\textsf{even}\text{--}\textsf{even}}
\newcommand{\eo}{\small\textsf{even}\text{--}\textsf{odd}}

\begin{figure}
\begin{footnotesize}
\begin{center}
	\begin{tikzpicture}[
		scale=1, 
		xscale=3, 
		node style/.style={thick, draw, ellipse, minimum width=68pt, minimum height = 16.66666pt, align=center}
	]

		\draw (-1, 0) node[node style] (oe) {$\oe$};
		\draw ( 0,-1) node[node style] (ee) {$\ee$};
		\draw ( 0, 1) node[node style] (oo) {$\oo$};
		\draw ( 1, 0) node[node style] (eo) {$\eo$};
		
		\draw [->] (0,-{1.6}) to (0,-{1.333333});
		
		\draw [-] (+0.9, +{1/3}) to (+0.9,+0.8);
		\draw [domain=0:90] plot ({+0.833333+0.066667*cos(\x)}, {+0.8+0.2*sin(\x)});
		\draw [->] (+0.833333,+1.0) to (+0.425, +1.0);
		\node at (0.8,0.8) {$-1$};
		
		\draw [-] (-0.425, 1.0) to (-0.833333,1.0);
		\draw [domain=90:180] plot ({-0.833333+0.066667*cos(\x)}, {0.8+0.2*sin(\x)});
		\draw [->] (-0.9,0.8) to (-0.9, {1/3});
		\node at (-0.8,0.8) {$-1$};
		
		\draw [-] (-0.9, -{1/3}) to (-0.9,-0.8);
		\draw [domain=180:270] plot ({-0.833333+0.066667*cos(\x)}, {-0.8+0.2*sin(\x)});
		\draw [->] (-0.833333,-1.0) to (-0.425, -1.0);
		\node at (-0.8,-0.8) {$+1$};
		
		\draw [-] (+0.425, -1.0) to (+0.833333,-1.0);
		\draw [domain=270:360] plot ({+0.833333+0.066667*cos(\x)}, {-0.8+0.2*sin(\x)});
		\draw [->] (+0.9,-0.8) to (+0.9, -{1/3});
		\node at (+0.8,-0.8) {$+1$};
		
		\newcommand\hmargin{0.0666667}
		\newcommand\vmargin{0.2}
		\newcommand\leftloopleft{-1.45}
		\newcommand\leftloopright{-1.166667}
		\newcommand\loopupper{0.45}
		\newcommand\looplower{-\loopupper}
		
		\draw [-] (\leftloopright+\hmargin, {1/3}) to (\leftloopright+\hmargin,\loopupper);
		\draw [domain=  0: 90] plot ({\leftloopright+\hmargin*cos(\x)}, {\loopupper+\vmargin*sin(\x)}); 
		\draw [-] (\leftloopright, \loopupper+\vmargin) to (\leftloopleft,\loopupper+\vmargin);
		\draw [domain= 90:180] plot ({\leftloopleft+\hmargin*cos(\x)}, {\loopupper+\vmargin*sin(\x)}); 
		\draw [-] (\leftloopleft-\hmargin,\loopupper) to (\leftloopleft-\hmargin, \looplower);
		\draw [domain=180:270] plot ({\leftloopleft+\hmargin*cos(\x)}, {\looplower+\vmargin*sin(\x)}); 
		\draw [-] (\leftloopright, \looplower-\vmargin) to (\leftloopleft,\looplower-\vmargin);
		\draw [domain=270:360] plot ({\leftloopright+\hmargin*cos(\x)}, {\looplower+\vmargin*sin(\x)}); 
		\draw [->] (\leftloopright+\hmargin,\looplower) to (\leftloopright+\hmargin, {-1/3});
		\node at (\leftloopleft,\loopupper) {$0$};
		
		\newcommand\rightloopright{-\leftloopleft}
		\newcommand\rightloopleft{-\leftloopright}

		\draw [-] (\rightloopleft-\hmargin,{-1/3}) to (\rightloopleft-\hmargin, \looplower);
		\draw [domain=180:270] plot ({\rightloopleft+\hmargin*cos(\x)}, {\looplower+\vmargin*sin(\x)}); 
		\draw [-] (\rightloopright, \looplower-\vmargin) to (\rightloopleft,\looplower-\vmargin);
		\draw [domain=270:360] plot ({\rightloopright+\hmargin*cos(\x)}, {\looplower+\vmargin*sin(\x)}); 
		\draw [-] (\rightloopright+\hmargin,\looplower) to (\rightloopright+\hmargin, \loopupper);
		\draw [domain=  0: 90] plot ({\rightloopright+\hmargin*cos(\x)}, {\loopupper+\vmargin*sin(\x)}); 
		\draw [-] (\rightloopright, \loopupper+\vmargin) to (\rightloopleft,\loopupper+\vmargin);
		\draw [domain= 90:180] plot ({\rightloopleft+\hmargin*cos(\x)}, {\loopupper+\vmargin*sin(\x)}); 
		\draw [->] (\rightloopleft-\hmargin, \loopupper) to (\rightloopleft-\hmargin,{1/3});
		\node at (\rightloopright,\looplower) {$0$};

		\node at (-0.8,-0.8) {$+1$};

		\draw [->] (-0.1,{2/3}) to (-0.1,{-2/3});
		\node at (-0.15,0) {$0$};
		
		\draw [->] (+0.1,{-2/3}) to (+0.1,{2/3});
		\node at (+0.15,0) {$0$};
		
	\end{tikzpicture}
\end{center}
\end{footnotesize}
\caption{A directed graph describing the scoring of a layered permutation.}
\label{fig-layered-zeta-automaton}
\end{figure}
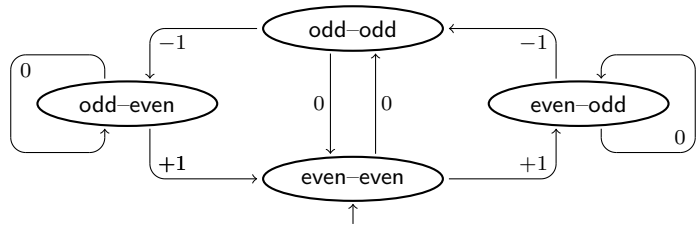

The change in score when moving from one layer of $\pi$ to the next is determined by the parity of the last entry of the layer we are leaving and the first entry of the layer we are entering. Specifically, the score changes by $-1$ if both of these entries are odd and $+1$ if both are even. This shows that in order to compute the score of the layered permutation $\pi$, we simply need to know the parities of the first and last entries of each of its layers. This information is represented by the labels of the nodes of the directed graph shown in Figure~\ref{fig-layered-zeta-automaton}.

Moreover, not all transitions between these nodes are possible, because the last entry of a layer is precisely $1$ greater than the first entry of the preceding layer. This is why there are only eight edges shown in Figure~\ref{fig-layered-zeta-automaton}. In this figure, each of those edges is labeled by the change in the score function. Note that the first layer must end with $1$ (an odd entry), and its first entry must be either odd (for a score of $0$) or even (for a score of $1$); this is equivalent to starting our walk on the graph in Figure~\ref{fig-layered-zeta-automaton} at the node labeled $(\ee)$ before any layers are read.

From this graphical interpretation of the scoring process, it is apparent that the score of a layered permutation can take on only three values: $-1$ if it ends at the node $(\oe)$; $0$ if it ends at either node $(\ee)$ or $(\oo)$; or $1$ if it ends at the node $(\eo)$. Except in this final case, we are done.

Now suppose that we are in the final case, so the ultimate layer of $\pi$ is of $(\eo)$ type. The first entry of this layer is the greatest entry of $\pi$, so we know that $\pi$ has even length. If $\pi$ were a decreasing permutation then there would be nothing to prove (as we have not claimed anything in this case), so let us further suppose that $\pi$ is not a decreasing permutation, and thus that $\pi$ has at least two layers. We further divide this case into two cases. In both cases, as in the proof of Proposition~\ref{prop-distant-inv-desc}, we construct a word $p\in[n-1]^n$ such that if $z_n$ contains $p$, then $\zeta_n$ contains $\pi$.

First, suppose that the penultimate layer of $\pi$ is of $(\eo)$ type and that this layer begins with the entry $\pi(b)$. This implies that the penultimate layer of $\pi$ has at least two entries (because its first and last entries have different parities). In this case, we define $p$ by
\[
	p(i)
	=
	\left\{\begin{array}{ll}
		\pi(i)&\text{if $\pi(i)<\pi(b)$,}\\
		\pi(i)-1&\text{if $\pi(i)\ge\pi(b)$.}
	\end{array}\right.
\]
In other words, to form $p$ from $\pi$ we decrement the first entry of the penultimate layer and all entries of the ultimate layer. Because the penultimate layer of $\pi$ has at least two entries, performing this operation creates an immediate repetition (of the entry $\pi(b)-1$) at the beginning of this layer. For example, if $\pi=21\ 6543\ 87$ then $\pi(b)=6$ and we decrement the $6$, $8$, and $7$ to obtain the word $p=21\ 5543\ 76$.

As with our previous constructions, if $z_n$ contains an occurrence of $p$, then $\zeta_n$ will contain a copy of $\pi$. We  establish that $z_n$ contains $p$ by showing that $s(p)=0$, which requires a further bifurcation into subcases. In both subcases, the scoring of $p$ is computed by considering its score in the antepenultimate layer (the layer immediately before the penultimate layer), the score change when reading the newly decremented first entry of the penultimate layer, the score penalty of $+1$ because $p$ contains an immediate repetition (namely, $\pi(b)-1$ occurs twice in a row), and finally the score change between the penultimate and ultimate layers. We label these cases by the final three nodes of the directed graph from Figure~\ref{fig-layered-zeta-automaton} visited while computing the score of $\pi$.
\begin{itemize}
\item The final three layers are of type $(\ee)(\eo)(\eo)$. Note that this case includes the possibility that $\pi$ has only two layers. If $p$ has an antepenultimate layer, then the score while reading that layer is $0$ and the ascent between its last entry and the newly decremented first entry of the penultimate layer is of different parity (even to odd), contributing $0$ to the score. If $p$ does not have an antepenultimate layer, then $p$ begins with the newly decremented first entry of its penultimate layer, which contributes $0$ to the score. In either case, the score of $p$ is $0$ upon reading the first entry of the penultimate layer. The immediate repetition in the penultimate layer contributes $+1$ to the score, while the ascent between the last entry of the penultimate layer and the newly decremented first entry of the ultimate layer is odd and thus contributes $-1$, so $s(p) = 0$.
\item The final three layers are of type $(\eo)(\eo)(\eo)$. The score while reading the antepenultimate layer is $+1$. The ascent between the last entry of the antepenultimate layer and the newly decremented first entry of the penultimate layer is odd, so it contributes $-1$ to the score, the immediate repetition in the penultimate layer contributes $+1$, and the ascent between the last entry of the penultimate layer and the newly decremented first entry of the ultimate layer is odd and thus contributes $-1$, so $s(p) = 0$.
\end{itemize}

It remains to treat the case where the penultimate layer is of $(\ee)$ type. Note that this case includes the possibility that the penultimate layer consists of a single entry. Suppose that the penultimate layer ends with the entry $\pi(a)$. We define $p$ by
\[
	p(i)
	=
	\left\{\begin{array}{ll}
		\pi(i)&\text{if $\pi(i)<\pi(a)$ or $\pi(i)=n$,}\\
		\pi(i)+1&\text{if $\pi(i)\ge\pi(a)$ and $\pi(i)\neq n$.}
	\end{array}\right.
\]
Thus in forming $p$ from $\pi$ we increment all entries of the penultimate layer and all but the first entry of the ultimate layer. For example, if $\pi=21\ 3\ 654\ 87$, then we increment the $6$, $5$, $4$, and $7$ to obtain the word $p=21\ 3\ 765\ 88$.

As before, if $z_n$ contains an occurrence of $p$ then $\zeta_n$ will contain a copy of $\pi$. Thus we need only show that $s(p)=0$, which we do, as in the previous case, by considering the scoring of the final three layers. As in that case, we identify two subcases.
\begin{itemize}
\item The final three layers are of type $(\oe)(\ee)(\eo)$. The score while reading the antepenultimate layer is $-1$. The ascent between the last entry of the antepenultimate layer and the newly incremented first entry of the penultimate layer is of different parity (even to odd) and thus contributes $0$ to the score. The ascent between the newly incremented last entry of the penultimate and the first entry of the ultimate layer (which is $n$) is of different parity (odd to even) and thus contributes $0$ to the score. Finally, the immediate repetition at the beginning of the ultimate layer (the two entries equal to $n$) contributes $+1$ to the score, so $s(p)=0$.
\item The final three layers are of type $(\oo)(\ee)(\eo)$. The score while reading the antepenultimate layer is $0$. The ascent between the last entry of the antepenultimate layer and the newly incremented first entry of the penultimate layer contributes $-1$ to the score (as both entries are now odd). The ascent between the newly incremented last entry of the penultimate layer and the first entry of the ultimate layer (which is $n$) is of different parity (odd to even) and thus contributes $0$ to the score. Finally, the immediate repetition at the beginning of the ultimate layer contributes $+1$ to the score, so $s(p)=0$.
\end{itemize}

As we have considered all of the cases, the proof is complete.
\end{proof}

It remains only to conclude. The length of $\zeta_n$ is $(n^2+1)/2$ when $n$ is odd and $n^2/2$ when $n$ is even. When $n$ is odd, we have established that $\zeta_n$ is universal. However, Proposition~\ref{prop-layered-zeta} shows that $\zeta_n$ need not be universal when $n$ is even. (Indeed, it can be checked that $\zeta_n$ is \emph{not} universal when $n$ is even.) However, in this case we know that $\zeta_n$ contains the decreasing permutation $(n-1)\cdots 21$ (for instance because it contains the permutation $(n-1)\cdots 21\oplus 1$). Thus we obtain a universal permutation by prepending a new maximum entry to $\zeta_n$, giving us the following bound.

\begin{theorem}
\label{thm-perms-perms}
There is a word over $\mathbb{P}$ of length $\left\lceil(n^2+1)/2\right\rceil$ containing subsequences order-isomorphic to every permutation of length $n$.
\end{theorem}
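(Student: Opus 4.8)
The plan is to assemble the three preceding propositions into a clean dichotomy on the parity of $n$, using $\zeta_n$ itself as the universal word when $n$ is odd and a one-entry enlargement of it when $n$ is even. The preliminary observation is that Propositions~\ref{prop-distant-inv-desc}, \ref{prop-distant-inv-desc-layered}, and \ref{prop-layered-zeta} together show that $\zeta_n$ contains every permutation of length $n$ with a single possible exception, namely the decreasing permutation of even length.

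First I would spell out this combination. Given any permutation $\pi$ of length $n$, either $\pi$ has a distant inverse-descent---in which case Proposition~\ref{prop-distant-inv-desc} places an occurrence of $\pi$ inside $\zeta_n$---or else, by Proposition~\ref{prop-distant-inv-desc-layered}, $\pi$ is layered. In the layered case, provided $\pi$ is not a decreasing permutation of even length, Proposition~\ref{prop-layered-zeta} again embeds $\pi$ in $\zeta_n$. Thus the only permutation of length $n$ that can escape $\zeta_n$ is the decreasing permutation $n\cdots 21$, and this can occur only when $n$ is even.

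Next I would dispatch the odd case directly: when $n$ is odd the exceptional permutation does not arise, so $\zeta_n$ is universal, and its length is $(n^2+1)/2 = \lceil (n^2+1)/2\rceil$, exactly as required. For even $n$, where $\zeta_n$ has length $n^2/2$ and fails to contain $n\cdots 21$, the plan is to prepend a single new maximum entry, producing a word $\zeta_n'$ of length $n^2/2+1 = \lceil (n^2+1)/2\rceil$. Prepending a maximum preserves every subsequence already present, so all permutations of length $n$ other than $n\cdots 21$ remain contained; it remains only to produce the missing decreasing permutation. For this I would argue that $\zeta_n$ already contains the shorter decreasing permutation $(n-1)\cdots 21$: although this pattern has length $n-1$ and so is not directly covered by the propositions, the permutation $(n-1)\cdots 21 \oplus 1$ is layered of length $n$ and is not decreasing, so Proposition~\ref{prop-layered-zeta} embeds it in $\zeta_n$, and any such embedding contains a copy of $(n-1)\cdots 21$. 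The new maximum, placed before this copy, then completes it to $n\cdots 21$, so $\zeta_n'$ is universal.

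The mathematical heart of the argument lives entirely in the three cited propositions, so I expect the theorem itself to amount essentially to a bookkeeping step. The only places demanding care are the arithmetic verifying that the ceiling $\lceil(n^2+1)/2\rceil$ absorbs the extra $+1$ incurred in the even case, and the slightly indirect route by which one certifies that $\zeta_n$ contains the length-$(n-1)$ decreasing pattern: since the propositions are phrased only for patterns of length $n$, one must pass through the length-$n$ layered permutation $(n-1)\cdots 21 \oplus 1$ rather than invoking a statement about $(n-1)\cdots 21$ itself.
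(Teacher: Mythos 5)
Your proposal is correct and follows essentially the same route as the paper: the paper likewise notes that $\zeta_n$ is universal for odd $n$, and for even $n$ prepends a new maximum entry, justifying containment of $n\cdots 21$ via the fact that $\zeta_n$ contains $(n-1)\cdots 21\oplus 1$. The only difference is that you spell out the bookkeeping (the parity arithmetic and the combination of the three propositions) in more detail than the paper does.
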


A computer search reveals that the bound in Theorem~\ref{thm-perms-perms} is best possible for $n\le 5$. Alas, for $n=6$ the bound in the Theorem~\ref{thm-perms-perms} is $19$, but Arnar Arnarson [private communication] has found that the permutation
\[
	6\ 14\ 10\ 2\ 13\ 17\ 5\ 8\ 3\ 12\ 9\ 16\ 1\ 7\ 11\ 4\ 15
\]
of length $17$ is universal for the permutations of length $6$. Computations have shown that no shorter permutation is universal for the permutations of length $6$.

The best lower bound in this case is still the one given by Arratia~\cite{arratia:on-the-stanley-:} in his initial work on the problem. Note that if the word $w$ of length $m$ over the alphabet $\mathbb{P}$ is to contain subsequences order-isomorphic to each permutation of length $n$, then we must have
\[
	{m\choose n}\ge n!.
\]
As in the analysis of the lower bound of the previous section, using the fact that $k!\ge (k/e)^k$ for all $k$, we see that for the above inequality to hold we must have
\[
	\bigg(\frac{me}{n}\bigg)^n
	\ge
	{m\choose n}
	\ge
	n!
	\ge
	\bigg(\frac{n}{e}\bigg)^n,
\]
from which it follows that we must have $m\ge n^2/e^2$. In fact, Arratia~\cite[Conjecture 2]{arratia:on-the-stanley-:} conjectured that the length of the shortest universal permutation in this case is asymptotic to $n^2/e^2$.

\section{Further Variations}

In case the infinitely many problems introduced so far are not enough, we conclude by briefly describing further variants that have received attention.
\begin{enumerate}

\item As observed in Section~\ref{sec-factors-n}, there is no universal cycle over the alphabet $[n]$ for the permutations of length $n$. However, Jackson~\cite{jackson:universal-cycle:} proved that there is a universal cycle over the alphabet $[n]$ for all shorthand encodings of permutations of length $n$, where the \emph{shorthand encoding} of the permutation $\pi$ of length $n$ is the word $\pi(1)\cdots\pi(n-1)$. This result and some extensions are discussed in \cite[Section 7.2.1.2, Exercises 111--113]{knuth:the-art-of-comp:4a}, where Knuth asked for an explicit construction of such a universal cycle (Jackson's proof was nonconstructive). Knuth's request was answered by Ruskey and Williams~\cite{ruskey:an-explicit-uni:}. Further constructions have been given by Holroyd, Ruskey, and Williams~\cite{holroyd:faster-generati:,holroyd:shorthand-unive:}.

\begin{figure}
\begin{footnotesize}
\begin{center}
	\begin{tikzpicture}[scale=0.375, baseline=(current bounding box.south)]
		\foreach \val [count = \i] in {1,2,3,4,5,2,6,1,5,4,3,1,2,5,1,6,2,5} {
			\draw ({90-(\i-1)*360/18}: 4) node{\val};
		}
		\draw [->] (0,1) arc (90:-180: 1);
	\end{tikzpicture}
\quad\quad\quad\quad
	\begin{tikzpicture}[scale=0.375, baseline=(current bounding box.south)]
		\foreach \val [count = \i] in {1,2,3,4,5,6,3,1,2,6,4,3,6,5} {
			\draw ({90-(\i-1)*360/14}: 4) node{\val};
		}
		\draw [<->] (0,1) arc (90:-180: 1);
	\end{tikzpicture}
\vspace{0.15in}
\caption{Two rosaries presented by Gupta for permutations of length $6$. The rosary on the left may only be read clockwise, while the rosary on the right may be read either clockwise or counterclockwise.}
\label{fig-rosary}
\end{center}
\end{footnotesize}
\end{figure}
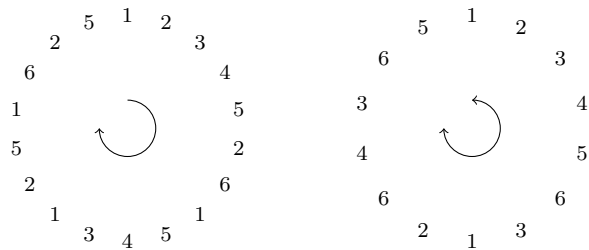

\item Gupta~\cite{gupta:on-permutation-:} considered a subsequence version of a universal cycle for permutations. A \emph{rosary} is a word $w$ over the alphabet $[n]$ such that every permutation of length $n$ is contained as a subsequence of the word
	\[
		w(k)w(k+1)\cdots w(|w|)w(1)w(2)\cdots w(k-1)
	\]
	for some value of $k$. In other words, thinking of the letters as being arranged in a circle as on the left of Figure~\ref{fig-rosary}, we may start anywhere we like, but must traverse the rosary clockwise, and cannot return to where we started. Gupta conjectured that one could always construct a rosary of length at most $n^2/2$. This conjecture was discussed by Guy~\cite[Problem E22]{guy:unsolved-proble:book} and proved in the case where $n$ is even by Lecouturier and Zmiaikou~\cite{lecouturier:on-a-conjecture:}. Gupta also considered the variant where one is allowed to traverse the rosary both clockwise and counterclockwise (see the right of Figure~\ref{fig-rosary}); he conjectured that one can always construct a rosary of length at most $3n^2/8+1/2$ in this version of the problem.

\item Albert and West~\cite{albert:universal-cycle:} studied the existence of universal cycles in the sense of Section~\ref{sec-factors-n+1} for permutation classes, making no restrictions on the size of the alphabet. To describe their results, we define a partial order on the set of all finite permutations where $\sigma\le\pi$ if $\pi$ contains a subsequence that is order-isomorphic to $\sigma$. If $\sigma\not\le\pi$ then we say that $\pi$ \emph{avoids} $\sigma$. A \emph{permutation class} is a set closed downward in this order. Every permutation class can be specified by giving the set of minimal elements \emph{not} in the class (this set is called the \emph{basis} of the class), and when presented in this form, we use the notation
\[
	\Av(B)=\{\pi\st\pi\mbox{ avoids all $\beta\in B$}\}.
\]
Most of Albert and West's results are negative in nature, but some classes they consider, such as $\Av(132,312)$, do have universal cycles over the alphabet $\mathbb{P}$. They say that a permutation class with such a universal cycle is \emph{value cyclic}.

\begin{figure}
{\footnotesize 
\begin{center}
\begin{tikzpicture}[x=.253cm,y=3cm]
	\draw (0,0) -- coordinate (x axis mid) (40,0);
	\draw (0,0) -- coordinate (y axis mid) (0,1.1);
	
	\foreach \x in {0,5,...,40}
 		\draw (\x,1pt) -- (\x,-3pt)
		node[anchor=north] {\x};
		
	\foreach \x in {5,9,13,17}
 		\draw (\x,1pt) -- (\x,-3pt)
		node[anchor=north] {};
	
	\draw (1pt,0) -- (-3pt,0) node[anchor=east] {$0$};
	\draw (1pt,0.5) -- (-3pt,0.5) node[anchor=east] {$\displaystyle\frac{1}{2}$};
	\draw (1pt,1) -- (-3pt,1) node[anchor=east] {$1$};

	
	\draw [thick] plot [mark=none] coordinates
		{(0,0.000000000) (1,0.000000000) (2,0.000000000) (3,0.000000000) (4,0.000000000) (5,0.017500000) (6,0.305200000) (7,0.597900000) (8,0.818500000) (9,0.923600000) (10,0.973600000) (11,0.992300000) (12,0.997500000) (13,0.999000000) (14,0.999600000) (15,1.000000000) (16,1.000000000) (17,1.000000000) (18,1.000000000) (19,1.000000000) (20,1.000000000) (21,1.000000000) (22,1.000000000) (23,1.000000000) (24,1.000000000) (25,1.000000000) (26,1.000000000) (27,1.000000000) (28,1.000000000) (29,1.000000000) (30,1.000000000) (31,1.000000000) (32,1.000000000) (33,1.000000000) (34,1.000000000) (35,1.000000000) (36,1.000000000) (37,1.000000000) (38,1.000000000) (39,1.000000000) (40,1.000000000)};
	\draw [thick] plot [mark=none] coordinates
		{(0,0.000000000) (1,0.000000000) (2,0.000000000) (3,0.000000000) (4,0.000000000) (5,0.000000000) (6,0.000000000) (7,0.000000000) (8,0.000000000) (9,0.010500000) (10,0.088100000) (11,0.265800000) (12,0.478900000) (13,0.667800000) (14,0.813700000) (15,0.899300000) (16,0.952600000) (17,0.980700000) (18,0.991200000) (19,0.996000000) (20,0.999100000) (21,0.999300000) (22,0.999500000) (23,0.999800000) (24,1.000000000) (25,1.000000000) (26,1.000000000) (27,1.000000000) (28,1.000000000) (29,1.000000000) (30,1.000000000) (31,1.000000000) (32,1.000000000) (33,1.000000000) (34,1.000000000) (35,1.000000000) (36,1.000000000) (37,1.000000000) (38,1.000000000) (39,1.000000000) (40,1.000000000)};
	\draw [thick] plot [mark=none] coordinates
		{(0,0.000000000) (1,0.000000000) (2,0.000000000) (3,0.000000000) (4,0.000000000) (5,0.000000000) (6,0.000000000) (7,0.000000000) (8,0.000000000) (9,0.000000000) (10,0.000000000) (11,0.000000000) (12,0.000000000) (13,0.000000000) (14,0.001400000) (15,0.016800000) (16,0.065200000) (17,0.177700000) (18,0.320400000) (19,0.488500000) (20,0.639300000) (21,0.776700000) (22,0.856200000) (23,0.922600000) (24,0.956100000) (25,0.978900000) (26,0.988200000) (27,0.994700000) (28,0.997700000) (29,0.999100000) (30,0.999800000) (31,0.999700000) (32,1.000000000) (33,1.000000000) (34,1.000000000) (35,1.000000000) (36,1.000000000) (37,1.000000000) (38,1.000000000) (39,1.000000000) (40,1.000000000)};
	\draw [thick] plot [mark=none] coordinates
		{(0,0.000000000) (1,0.000000000) (2,0.000000000) (3,0.000000000) (4,0.000000000) (5,0.000000000) (6,0.000000000) (7,0.000000000) (8,0.000000000) (9,0.000000000) (10,0.000000000) (11,0.000000000) (12,0.000000000) (13,0.000000000) (14,0.000000000) (15,0.000000000) (16,0.000000000) (17,0.000000000) (18,0.000000000) (19,0.000000000) (20,0.000100000) (21,0.001900000) (22,0.009900000) (23,0.038100000) (24,0.100600000) (25,0.196000000) (26,0.317300000) (27,0.462400000) (28,0.583000000) (29,0.711700000) (30,0.805500000) (31,0.879600000) (32,0.922300000) (33,0.953300000) (34,0.975000000) (35,0.985200000) (36,0.991400000) (37,0.996200000) (38,0.997800000) (39,0.999300000) (40,0.999200000)};
\end{tikzpicture}
\end{center}
}
\caption{The proportion of permutations containing subsequences order-isomorphic to every permutation of length $n$, by length, for $3\le n\le 6$.}
\label{fig-plot-universal}
\end{figure}
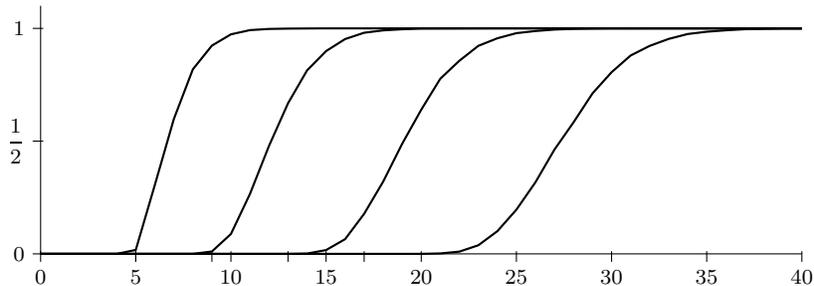

\item At the end of his paper, Arratia~\cite{arratia:on-the-stanley-:} defines $t(n)$ to be the least integer $m$ such that at least half of all permutations of length $m$ contain subsequences order-isomorphic to every permutation of length $n$, and he states that Noga Alon has conjectured that $t(n)$ is asymptotic to $n^2/4$. Figure~\ref{fig-plot-universal} plots the proportions of these permutations of lengths $0\le m\le 40$ for $n=3$, $4$, $5$, and $6$. For $n=3$, we compute these proportions exactly using the formula of Simion and Schmidt~\cite{simion:restricted-perm:} mentioned at the beginning of Section~\ref{sec-subseq-P}, while for $n\ge 4$, these plots are obtained by random sampling to a high level of confidence. This data and further computations suggest the following values of $t(n)$ for $1\le n\le 8$:
	\[
	\begin{array}{cc}
		n&t(n)\\\hline
		1&1\\
		2&3\\
		3&7\\
		4&13\\
		5&20\\
		6&28\\
		7&36\\
		8&48
	\end{array}
	\]
While the first six values of $t(n)$ above might lead the reader to suspect that $t(n)$ is the nearest integer to $\pi n^2/4$, this seems not to hold for $n=7,8$. We leave it to the reader to decide whether these values support or undermine Alon's conjecture that $t(n)\sim n^2/4$.

\item Universal words over $\mathbb{P}$ containing, as subsequences, all permutations of length $n$ from a proper permutation class have also been studied. Bannister, Cheng, Devanny, and Eppstein~\cite{bannister:superpatterns-a:} construct a universal word of length $n^2/4+\Theta(n)$ for the permutations of length $n$ in the class $\Av(132)$, and they show that every \emph{proper} subclass $\C\subsetneq\Av(132)$ has a universal word of length at most $O(n\log^{O(1)} n)$. In~\cite{bannister:superpatterns-a:}, among other results, Bannister, Devanny, and Eppstein find a universal word of length at most $22n^{3/2}+\Theta(n)$ for the class $\Av(321)$. Finally, Albert, Engen, Pantone, and Vatter~\cite{albert:universal-layer:} consider the class of layered permutations, $\Av(231, 312)$. In addition to verifying a conjecture of Gray~\cite{gray:bounds-on-super:}, they show that the length of the shortest universal word over $\mathbb{P}$ containing all layered permutations of length $n$ as subsequences is given \emph{precisely} by $a(0)=0$ and
\[
	a(n) = (n+1)\lceil\log_2 (n+1)\rceil -2^{\lceil \log_2 (n+1)\rceil} + 1
\]
for $n\ge 1$.

\end{enumerate}

\bigskip

\noindent{\bf Acknowledgements.}
We thank Michael Albert, Arnar Arnarson, Robert Brignall, Robin Houston, and Jay Pantone for numerous fruitful discussions that improved this work. We are additionally grateful to Jay Pantone for his assistance in verifying that no permutation of length $16$ or less contains all permutations of length $6$ as subsequences.

\bibliographystyle{acm}
\bibliography{../../../refs}

\end{document}